\newcommand{\CC}{{\mathbf C}}
\newcommand{\RR}{{\mathbf R}}
\newcommand{\QQ}{{\mathbf{Q}}}
 \DeclareMathOperator{\Prob}{\mathbf{P}}   
 \DeclareMathOperator{\E}{\mathbf{E}}      
\newcommand{\convlaw}{\overset{\mbox{\rm \scriptsize law}}{\Rightarrow}}
\def\stacksum#1#2{{\stackrel{{\scriptstyle #1}}
{{\scriptstyle #2}}}}
\newcommand{\demi}{{\textstyle{\frac{1}{2}}}}
\newcommand{\eps}{\varepsilon}
\newcommand{\Cc}{\CC}
\newcommand{\Rr}{\RR}
\newcommand{\Qq}{\QQ}
\newcommand{\Fp}{\mathbf{F}}
\newcommand{\proba}{\Prob}
\newcommand{\expect}{\E}
\newcommand{\charfun}{\mathbf{1\!\!\!1}}
\newcommand{\harm}[1]{h_{{#1}}}
\newcommand{\irred}{\pi}
\newcommand{\nirred}{|\irred|}
\newcommand{\degp}[1]{d^+({{#1}})}
\newcommand{\degm}[1]{d^-({{#1}})}
\newcommand{\ellp}[1]{\ell^+({{#1}})}
\newcommand{\ellm}[1]{\ell^-({{#1}})}
\newcommand{\erreurm}[1]{\Bigl(1+O\Bigl({{#1}}\Bigr)\Bigr)}
\newcommand{\ra}{\rightarrow}
\newcommand{\lra}{\longrightarrow}
\newcommand{\fleche}[1]{\stackrel{#1}{\lra}}
\newcommand{\sym}{\mathfrak{S}}
\def\sumb{\mathop{\sum \Bigl.^{\flat}}\limits}
\DeclareMathOperator{\spec}{Spec}
\DeclareMathOperator{\ord}{ord}
\renewcommand{\leq}{\leqslant}
\renewcommand{\geq}{\geqslant}
\newtheorem{thm}{Theorem}[section]
\newtheorem{lem}[thm]{Lemma}
\newtheorem{prop}[thm]{Proposition}
\theoremstyle{definition}
\newtheorem{defn}[thm]{Definition}
\theoremstyle{remark}
\newtheorem{rem}[thm]{Remark}
\newtheorem{example}[thm]{Example}
\numberwithin{equation}{section}
\begin{document}
\title[Mod-Poisson convergence]{Mod-Poisson convergence in probability
  and number theory}

 \author{E. Kowalski} \address{ETH Z\"urich -- D-MATH \\ R\"{a}mistrasse
   101\\ 8092 Z\"{u}rich, Switzerland}
 \email{kowalski@math.ethz.ch}

\author{A. Nikeghbali}
 \address{Institut f\"ur Mathematik,
 Universit\"at Z\"urich, Winterthurerstrasse 190,
 CH-8057 Z\"urich,
 Switzerland}
 \email{ashkan.nikeghbali@math.uzh.ch}

\subjclass[2000]{60F05, 60F15, 60E10, 11N25, 11T55, 14G10} 

\keywords{Poisson distribution, Poisson convergence, distribution of
  values of $L$-functions, random permutations, Erd\H{o}s-Kac
  Theorem, Katz-Sarnak philosophy} 

\begin{abstract}
  Building on earlier work introducing the notion of ``mod-Gaussian''
  convergence of sequences of random variables, which arises naturally
  in Random Matrix Theory and number theory, we discuss the analogue
  notion of ``mod-Poisson'' convergence.  We show in particular how it
  occurs naturally in analytic number theory in the classical
  Erd\H{o}s-Kac Theorem.  In fact, this case reveals deep
  connections and analogies with conjectures concerning the
  distribution of $L$-functions on the critical line, which belong to
  the mod-Gaussian framework, and with analogues over finite fields,
  where it can be seen as a zero-dimensional version of the
  Katz-Sarnak philosophy in the ``large conductor'' limit.
\end{abstract}

\maketitle

\section{Introduction} \label{section:Intro} 

In our earlier paper~\cite{jkn} with J. Jacod,\footnote{\ Although
  this new paper is largely self-contained, it is likely to be most
  useful for readers who have at least looked at the introduction and
  the examples in~\cite{jkn}, especially Section 4} motivated by
results from Random Matrix Theory and probability, we have introduced
the notion of mod-Gaussian convergence of a sequence of random
variables $(Z_N)$. This occurs when the sequence does not (typically)
converge in distribution, so the sequence of characteristic functions
does not converge pointwise to a limit characteristic function, but
nevertheless, the characteristic functions decay precisely like a
suitable Gaussian, i.e., the limits
\begin{equation}\label{eq-mod-gaussian}
\lim_{N\ra +\infty}{\exp(-iu\beta_N+u^2\gamma_N/2)\expect(e^{iuZ_N})} 
\end{equation}
exist, locally uniformly for $u\in\Rr$, for some parameters
$(\beta_N,\gamma_N)\in \Rr\times [0,+\infty[$. 
\par
Besides giving natural and fairly general instances of such behavior
in probability theory, we investigated arithmetic instances of it.  In
that respect, we noticed that the limits~(\ref{eq-mod-gaussian}) can
not exist if the random variables $Z_N$ are integer-valued, since the
characteristic functions $\expect(e^{iuZ_N})$ are then
$2\pi$-periodic, and we discussed briefly the possibility of
introducing ``mod-Poisson convergence'', that may be applicable to
such situations. Indeed, we noticed that this can be seen to occur in
number theory in one approach to the famous Erd\H{o}s-Kac Theorem.
\par
In the present paper, we look more deeply at mod-Poisson
convergence. We first recall the definition and give basic facts about
mod-Poisson convergence in Sections~\ref{sec-mod-poisson}
and~\ref{sec-2}. Sections~\ref{sec-nt} and~\ref{sec-finite-fields}
consider number-theoretic situations related to the Erd\H{o}s-Kac
Theorem. We show that the nature of the mod-Poisson convergence
parallels closely the structure of conjectures for the moments of zeta
functions on the critical line. This becomes especially clear over
finite fields, leading to very precise analogies with the Katz-Sarnak
philosophy and conjectures. In fact, in Section~\ref{sec-main}, we
prove a version of the mod-Poisson convergence for the number of
irreducible factors of a polynomial in $\Fp_q[X]$, as the degree
increases, which is a zero-dimensional case of the large conductor
limit for $L$-functions (see Remark~\ref{rm-scheme} and
Theorem~\ref{th-katz-sarnak}). Our proof convincingly explains the
probabilistic features of the limiting function, involving both local
models of primes and large random permutations.
\par
\smallskip
\par
\textbf{Notation.} In number-theoretic contexts, $p$ always refers to
a prime number, and sums and products over $p$ (with extra conditions)
are over primes satisfying those conditions.
\par
For any integer $d\geq 1$, we denote by $\sym_d$ the symmetric
group on $d$ letters and by $\sym_d^{\sharp}$ the set of its conjugacy
classes. Recall these can be identified with partitions of $d$, where
the partition
$$
n=1\cdot r_1+\cdots+ d\cdot r_d,\quad\quad r_i\geq 0,
$$
corresponds to permutations with $r_1$ fixed points, $r_2$ disjoint
$2$-cycles, ...., $r_d$ disjoint $d$-cycles.  For $\sigma\in\sym_d$,
we write $\sigma^{\sharp}$ for its conjugacy class. We denote by
$\varpi(\sigma)$ the number of disjoint cycles occurring in $\sigma$.
\par
By $f\ll g$ for $x\in X$, or $f=O(g)$ for $x\in X$, where $X$ is an
arbitrary set on which $f$ is defined, we mean synonymously that there
exists a constant $C\geq 0$ such that $|f(x)|\leq Cg(x)$ for all $x\in
X$. The ``implied constant'' refers to any value of $C$ for which this
holds. It may depend on the set $X$, which is usually specified
explicitly, or clearly determined by the context.
\par
\smallskip
\par
\textbf{Acknowledgments}. We thank P-O. Dehaye and A.D. Barbour for
interesting discussions related to this paper, and P. Bourgade for
pointing out a computational mistake in an earlier draft. Thanks also
to the referee for a careful reading of the manuscript.
\par
The second author was partially supported by SNF Schweizerischer
Nationalfonds Projekte Nr. 200021 119970/1.

\section{General properties of mod-Poisson
  convergence}\label{sec-mod-poisson}

Recall that a Poisson random variable $P_{\lambda}$ with parameter
$\lambda>0$ is one taking (almost surely) integer values $k\geq 0$
with
$$
\proba(P_{\lambda}=k)=\frac{\lambda^k}{k!} e^{-\lambda}.
$$
\par
Its characteristic function is then given by
$$
\E(e^{iuP_{\lambda}})=\exp(\lambda(e^{iu}-1)).
$$

\begin{defn}
  We say that a sequence of random variables $(Z_N)$ converges in the
  mod-Poisson sense with parameters $\lambda_N$ if the
  following limits
$$
\lim_{N\ra+\infty}{\expect(e^{iuP_{\lambda_N}})^{-1}
\expect(e^{iuZ_N})}=
\lim_{N\ra +\infty}
\exp(\lambda_N(1-e^{iu}))\E(e^{iuZ_N})=\Phi(u)
$$
exist for every $u\in\RR$, and the convergence is locally uniform. The
\emph{limiting function} $\Phi$ is then continuous and $\Phi(0)=1$.
\end{defn}

\begin{example}
  (1) The simplest case of mod-Poisson convergence (which justifies
  partly the name) is given by
\begin{equation}\label{eq-regular}
Z_N=P_{\lambda_N}+Z
\end{equation}
where $P_{\lambda_N}$ is a Poisson variable with parameter
$\lambda_N$, while $Z$ is an arbitrary random variable independent of
all $P_{\lambda_N}$. In that case, the limiting function is the
characteristic function $\expect(e^{iuZ})$ of $Z$.
\par
(2) Often, and in particular in the cases of interest in the
arithmetic part of this paper, $Z_N$ is (almost surely)
integer-valued; in that case, its characteristic function is
$2\pi$-periodic, and it follows that if the convergence is locally
uniform, then it is in fact uniform for $u\in\Rr$. However, this is
not always the case, as shown by examples like~(\ref{eq-regular})
if the fixed random variable $Z$ is not itself integer-valued.
\par
(3) A.D. Barbour pointed out to us the paper~\cite{hwang} of
H-K. Hwang. Hwang introduces an analytic assumption~\cite[(1),
p. 451]{hwang} on the \emph{probability generating functions} of
integer-valued random variables $(X_N)$, i.e., on the power series
$$
\sum_{n\geq 1}{\proba(X_N=n)z^n}=\expect(z^{X_N}),
$$
which is very closely related to mod-Poisson convergence.  This
assumption is used as a basis to deduce results on Poisson
approximation of the sequence (see Proposition~\ref{th-ks-distance}
below for a simple example).  Hwang also gives many additional
examples where his assumption holds.
\end{example}

If we have mod-Poisson convergence with parameters $(\lambda_N)$ which
converge, then $(Z_N)$ converges in law. Such a situation arises for
instance in the so-called Poisson convergence (see,
e.g.,\cite[p. 188]{breiman}), which we recall: 

\begin{prop}
Let $(X_k^{(n)})$ be an array of independent random variables,
identically distributed in each row, according to a Bernoulli
distribution with parameter $x_n$: 
$$
\Prob(X_i^{(n)}=1)=x_n \text{ and }
\Prob(X_i^{(n)}=0)=1-x_n\quad\text{ for } 1\leq i\leq n.
$$
\par
Set $S_n=X_1^{(n)}+\ldots+X_n^{(n)}$. Then, $S_n$ converges in
distribution if and only if $nx_n\to\lambda>0$, when $n\to\infty$. The
limit random variable $S$ is a Poisson random variable with parameter
$\lambda$.
\end{prop}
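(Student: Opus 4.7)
The plan is to use characteristic functions together with L\'evy's continuity theorem, exploiting the Bernoulli factorization which yields a closed-form expression for the characteristic function of $S_n$. A direct computation gives
$$
\E(e^{iuS_n}) = \bigl(1 + x_n(e^{iu}-1)\bigr)^n
$$
for every $u \in \Rr$ and every $n \geq 1$.

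For the \emph{if} direction, I would set $a_n(u) = nx_n(e^{iu}-1)$, which tends to $\lambda(e^{iu}-1)$ locally uniformly in $u$ under the hypothesis $nx_n \to \lambda$. The elementary fact that $(1 + z_n/n)^n \to e^z$ whenever $z_n \to z$ in $\Cc$ then yields
$$
\E(e^{iuS_n}) = \bigl(1 + a_n(u)/n\bigr)^n \longrightarrow \exp\bigl(\lambda(e^{iu}-1)\bigr),
$$
which is the characteristic function of $P_\lambda$. L\'evy's continuity theorem gives $S_n \convlaw P_\lambda$.

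For the \emph{only if} direction, suppose $S_n$ converges in law. The crucial step is to show that $(nx_n)$ is bounded: otherwise some subsequence satisfies $nx_{n_k} \to +\infty$, and Chebyshev's inequality applied with $\E(S_n) = nx_n$ and $\V(S_n) = nx_n(1-x_n) \leq nx_n$ forces
$$
\Prob(S_{n_k} \leq K) \leq \frac{nx_{n_k}}{(nx_{n_k} - K)^2} \longrightarrow 0
$$
for each fixed $K$, contradicting the tightness of the convergent-in-law sequence $(S_n)$. Once $(nx_n)$ is known to be bounded, any convergent subsequence $nx_{n_k} \to \mu$ gives, by the already proved \emph{if} direction, $S_{n_k} \convlaw P_\mu$; since the full sequence $(S_n)$ has a unique limit in law, all accumulation points of $(nx_n)$ must coincide. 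Thus $nx_n \to \lambda$ and the limit of $S_n$ is $P_\lambda$.

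The main obstacle is mild: only the tightness argument ruling out $nx_n \to \infty$ in the \emph{only if} direction is non-mechanical, while the \emph{if} direction is essentially a one-line consequence of $(1+z/n)^n \to e^z$ and of L\'evy's continuity theorem.
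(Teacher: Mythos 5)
Your proof is correct and entirely standard; the paper itself gives no proof of this proposition --- it is recalled as the classical Poisson convergence theorem with a pointer to Breiman --- so there is nothing in the paper to compare your argument against. The one quibble concerns the statement rather than your argument: as written the ``only if'' is slightly too strong, since $nx_n\to 0$ also yields convergence in distribution (to the degenerate variable at $0$, i.e.\ $P_0$), and your tightness-plus-subsequence argument in fact establishes the correct version $nx_n\to\lambda\geq 0$ with limit $P_\lambda$.
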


We will state an analogue of Poisson convergence in the mod-Poisson
setting in the next section, but first we discuss some basic
consequences.  The link with mod-Gaussian convergence in the last part
of the next result is quite intriguing.

\begin{prop}\label{prop-cor-mod-poisson}
  Let $(Z_N)$ be a sequence of random variables which converges in the
  mod-Poisson sense, with parameters $\lambda_N$, such that
$$
\lim_{N\to\infty}\lambda_N=\infty.
$$
\par
Then the following hold:
\par
\emph{(1)} The re-scaled variables $Z_N/\lambda_N$ converge in
probability to $1$, that is, for any $\varepsilon>0$,
$$
\lim_{N\to\infty}\Prob\Bigl(
\Bigl|\frac{Z_N}{\lambda_N}-1\Bigr|>\varepsilon\Bigr)=0.
$$
\par
\emph{(2)} We have the normal convergence
\begin{equation}\label{eq-clt}
\dfrac{Z_N-\lambda_N}{\sqrt{\lambda_N}}\convlaw\mathcal{N}(0,1),
\end{equation}
where $\mathcal{N}$ is a standard Gaussian random variable.
\par
\emph{(3)} The random variables
$$
Y_N=\frac{Z_N-\lambda_N}{\lambda_N^{1/3}}
$$
converge in the mod-Gaussian sense~\emph{(\ref{eq-mod-gaussian})} with
parameters $(0,\lambda_N^{1/3})$ and universal limiting function
$$
\Phi_u(t)=\exp(-it^3/6).
$$
\end{prop}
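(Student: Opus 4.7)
The plan is to exploit the defining convergence
$$\exp(\lambda_N(1-e^{iu}))\E(e^{iuZ_N})\to\Phi(u),$$
rewritten as $\E(e^{iuZ_N})=\exp(\lambda_N(e^{iu}-1))(\Phi(u)+o(1))$ locally uniformly in $u$. All three statements will follow by substituting the rescaled argument $u/\lambda_N^{\alpha}$ for $\alpha\in\{1,1/2,1/3\}$, expanding $e^{iu/\lambda_N^\alpha}-1$ to the matching order, and passing to the limit. Since $\Phi$ is continuous with $\Phi(0)=1$, the factor $\Phi(u/\lambda_N^{\alpha})$ tends to $1$; and local uniformity of the mod-Poisson hypothesis ensures that the $o(1)$ evaluated at the shrinking argument $u/\lambda_N^\alpha$ remains uniformly $o(1)$ for $u$ in any fixed compact set (because the image $\{u/\lambda_N^\alpha:u\in K\}$ is eventually contained in a fixed compact neighbourhood of $0$).

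For (1), the first-order expansion $e^{iu/\lambda_N}-1=iu/\lambda_N+O(\lambda_N^{-2})$ gives $\lambda_N(e^{iu/\lambda_N}-1)\to iu$, whence $\E(e^{iuZ_N/\lambda_N})\to e^{iu}$. Thus $Z_N/\lambda_N$ converges in distribution to the constant $1$, which is equivalent to convergence in probability. For (2), the second-order expansion $e^{iu/\sqrt{\lambda_N}}-1=iu/\sqrt{\lambda_N}-u^2/(2\lambda_N)+O(\lambda_N^{-3/2})$ combined with the prefactor $e^{-iu\sqrt{\lambda_N}}$ yields
$$\E\bigl(\exp(iu(Z_N-\lambda_N)/\sqrt{\lambda_N})\bigr)=e^{-iu\sqrt{\lambda_N}}\exp(\lambda_N(e^{iu/\sqrt{\lambda_N}}-1))(\Phi(u/\sqrt{\lambda_N})+o(1))\to e^{-u^2/2},$$
the standard Gaussian characteristic function, giving the CLT.

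For (3), the key step is to push the expansion one order further:
$$\lambda_N(e^{iu/\lambda_N^{1/3}}-1)=iu\lambda_N^{2/3}-\tfrac{u^2}{2}\lambda_N^{1/3}-\tfrac{iu^3}{6}+O(\lambda_N^{-1/3}).$$
Then
$$\exp\bigl(u^2\lambda_N^{1/3}/2\bigr)\E(e^{iuY_N})=\exp\bigl(-iu\lambda_N^{2/3}+u^2\lambda_N^{1/3}/2\bigr)\E(e^{iuZ_N/\lambda_N^{1/3}}),$$
and the two divergent terms $iu\lambda_N^{2/3}$ and $-u^2\lambda_N^{1/3}/2$ in the exponent cancel exactly against the mod-Poisson expansion, leaving $\exp(-iu^3/6)(1+o(1))$ locally uniformly in $u$. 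This is precisely mod-Gaussian convergence with parameters $(0,\lambda_N^{1/3})$ and limit $\exp(-iu^3/6)$.

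The argument is essentially mechanical; the only point that requires care is the uniformity of the $o(1)$ under rescaling, handled as explained above. What I find most striking, and would flag in the write-up, is that the cubic phase $-iu^3/6$ records exactly the third cumulant of the Poisson distribution (which equals its mean), surviving once the linear drift and Gaussian fluctuation have been subtracted off. This cumulant interpretation makes the otherwise intriguing link between the mod-Poisson and mod-Gaussian frameworks completely transparent.
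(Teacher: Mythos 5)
Your proof is correct and follows essentially the same route as the paper: rescale the argument of the characteristic function by $\lambda_N^{-\alpha}$, Taylor-expand $\lambda_N(e^{iu/\lambda_N^{\alpha}}-1)$ to the relevant order, and invoke the locally uniform convergence to control the error at the shrinking argument (the paper uses $\Phi(0)=1$ in exactly the way you use $\Phi(u/\lambda_N^{\alpha})\to 1$). The only difference is that you carry out part (3) explicitly, which the paper leaves as an exercise; your computation and the cumulant interpretation of the $-iu^3/6$ phase are both sound.
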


\begin{proof}
  This is a very standard probabilistic argument, but we give details
  for completeness.
\par
(1) For $u\in\RR$, we write
$$
s=\frac{u}{\lambda_N}
$$
(note that $s$ depends on $N$ and $s\ra 0$ when $N\ra +\infty$).  By
the definition of mod-Poisson convergence (in particular the uniform
convergence with respect to $u$), we have
$$
\lim_{N\ra +\infty}{\exp(\lambda_N(1-e^{is}))\E(e^{is Z_N})}=\Phi(0)=1.
$$
\par
The fact that 
$$ 
\exp(\lambda_N(e^{is}-1))=\exp((is+O(s^2))\lambda_N),
$$
yields
$$
\lim_{N\ra +\infty}{\E(e^{i u Z_N/\lambda_N})}=e^{i u}.
$$
Consequently, $(Z_N/\lambda_N)$ converges in distribution to $1$ and
hence converges in probability since the limiting random variable is
constant.
\par
(2) For $u\in\RR$, we now write
$$
t=\frac{u}{\sqrt{\lambda_N}}
$$
(note that $t$ depends on $N$ and $t\ra 0$ when $N\ra +\infty$).
\par
Again, by the definition of mod-Poisson convergence (in particular the
uniform convergence with respect to $u$), we have
\begin{equation}\label{eq-1}
\lim_{N\ra +\infty}{\exp(\lambda_N(1-e^{it}))\E(e^{it Z_N})}=\Phi(0)=1.
\end{equation}
\par
Moreover, we have
\begin{align}
  \exp(\lambda_N(e^{it}-1))
&=\exp((it-t^2/2+O(t^3))\lambda_N)\nonumber\\
&=\exp\Bigl(iu\sqrt{\lambda_N}-\frac{u^2}{2}+
O\Bigl(
\frac{u^3}{\sqrt{\lambda_N}}
\Bigr)\Bigr).
\label{eq-2}
\end{align}
\par
 Let
$$
Y_N=\frac{Z_N-\lambda_N}{\sqrt{\lambda_N}}.
$$
We have then
\begin{equation}\label{eq-charfn}
  \E(e^{iuY_N})=\exp(-iu\sqrt{\lambda_N})\E(e^{it Z_N}).
 \end{equation}
\par
Writing~(\ref{eq-charfn}) as
$$
\exp(-iu\sqrt{\lambda_N}) \times
\exp((e^{it}-1)\lambda_N)\times 
\exp((1-e^{it})\lambda_N)
\E(e^{it Z_N}) ,
$$
we see from~(\ref{eq-1}) and~(\ref{eq-2}) that this is
$$
\exp\Bigl(-\frac{u^2}{2}+O\Bigl(\frac{u^3}{\sqrt{\lambda_N}}\Bigr)
\Bigr)
(1+o(1))\ra \exp\Bigl(-\frac{u^2}{2}\Bigr),\quad \text{ as }\quad
N\ra +\infty,
$$
and by L\'evy's criterion, this concludes the proof.
\par
Part (3) is a similar straightforward computation, which we leave as
an enlightening exercise.
\end{proof}

In stating the renormalized convergence to a Gaussian variable, there
is a loss of information, since the ``Poisson nature'' of the sequence
is lost. This is illustrated further by the following result which
goes some way towards clarifying the probabilistic nature of
mod-Poisson convergence. We recall that the Kolmogorov-Smirnov
distance between real-valued random variables $X$ and $Y$ is defined
by
$$
d_{KS}(X,Y)=\sup_{x\in\Rr}{|\proba(X\leq x)-\proba(Y\leq x)|}.
$$


\begin{prop}\label{th-ks-distance}
  Let $(Z_N)$ be a sequence of random variables which are
  a.s. supported on positive integers, and which converges in the
  mod-Poisson sense for some parameters $(\lambda_N)$, such that
  $\lambda_N\to\infty$ when $N\to\infty$. Assume further that the
  characteristic functions $\expect(e^{iuZ_N})$ are of $C^1$ class and
  the convergence holds in $C^1$ topology.
\par
Then we have
$$
\lim_{N\ra +\infty}{d_{KS}(Z_N,P_{\lambda_N})}=0,
$$
where $P_{\lambda_N}$ is a Poisson random variable with parameter
$\lambda_N$, and in fact
$$
d_{KS}(Z_N,P_{\lambda_N})\leq \|\Phi'\|_{\infty}\lambda_N^{-1/2},
$$
for $N\geq 1$. 
\end{prop}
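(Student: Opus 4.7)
The plan is to bound $d_{KS}(Z_N,P_{\lambda_N})$ by Fourier inversion, exploiting the fact that both random variables are integer-valued (so the relevant integrals live on the compact period $[-\pi,\pi]$) and the strong localization of the Poisson characteristic function at the origin as $\lambda_N\to\infty$. Write $\Phi_N(u)=\exp(\lambda_N(1-e^{iu}))\expect(e^{iuZ_N})$; this is $2\pi$-periodic (both factors are), so the hypothesized $C^1$ convergence $\Phi_N\to\Phi$ is automatically uniform on $\Rr$, with $\|\Phi_N'\|_\infty\to\|\Phi'\|_\infty$. Since $\Phi_N(0)=1$, integration of the derivative yields the crucial vanishing-at-zero bound
\begin{equation*}
|\Phi_N(u)-1|\leq |u|\,\|\Phi_N'\|_\infty.
\end{equation*}

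For any $\Nn$-valued random variable $X$, Fourier inversion on the period gives
\begin{equation*}
\proba(X\leq n)=\frac{1}{2\pi}\int_{-\pi}^{\pi}\frac{1-e^{-i(n+1)u}}{1-e^{-iu}}\expect(e^{iuX})\,du.
\end{equation*}
Subtracting this identity for $X=Z_N$ and $X=P_{\lambda_N}$ and factoring out the Poisson characteristic function, we get
\begin{equation*}
\proba(Z_N\leq n)-\proba(P_{\lambda_N}\leq n)=\frac{1}{2\pi}\int_{-\pi}^{\pi}\frac{1-e^{-i(n+1)u}}{1-e^{-iu}}\,e^{\lambda_N(e^{iu}-1)}\,(\Phi_N(u)-1)\,du.
\end{equation*}
The Jordan inequality $|\sin(u/2)|\geq|u|/\pi$ on $[-\pi,\pi]$ provides the Dirichlet-kernel bound $|(1-e^{-i(n+1)u})/(1-e^{-iu})|\leq \pi/|u|$ uniformly in $n$, which exactly cancels the $|u|$ coming from the bound on $|\Phi_N(u)-1|$. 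What remains is then controlled by $\|\Phi_N'\|_\infty\int_{-\pi}^{\pi}|e^{\lambda_N(e^{iu}-1)}|\,du$, and the weight $e^{\lambda_N(\cos u-1)}$ is concentrated near $u=0$: either via the elementary inequality $\cos u-1\leq-2u^2/\pi^2$ followed by a Gaussian integral, or via the exact identity $\int_{-\pi}^{\pi}e^{\lambda_N(\cos u-1)}\,du=2\pi e^{-\lambda_N}I_0(\lambda_N)\sim\sqrt{2\pi/\lambda_N}$, this integral is of order $\lambda_N^{-1/2}$. Putting the three pieces together gives a bound of order $\|\Phi'\|_\infty\lambda_N^{-1/2}$ uniformly in $n$, and taking the supremum in $n$ yields the claim; the first assertion follows at once since $\lambda_N\to\infty$.

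The main obstacle is cosmetic rather than conceptual: the naive argument outlined above produces a constant of the order of $\sqrt{\pi/2}$ rather than the unit constant written in the statement, and pinning down that optimal constant requires slightly sharper bookkeeping — for instance, observing that the Dirichlet kernel is tested against the Poisson weight $e^{\lambda_N(\cos u-1)}$ and not against its worst-case upper bound, or invoking the precise Bessel asymptotics. The qualitative rate $\lambda_N^{-1/2}$ that drives the proposition, however, is already delivered by the simple chain of inequalities above.
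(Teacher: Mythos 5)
Your argument is essentially the paper's: both proofs reduce $d_{KS}(Z_N,P_{\lambda_N})$ to an integral over $[-\pi,\pi]$ of $|\expect(e^{iuZ_N})-\expect(e^{iuP_{\lambda_N}})|/|u|$, factor out the Poisson characteristic function to write the difference as $e^{\lambda_N(\cos u-1)}|\Phi_N(u)-1|$, cancel the $1/|u|$ against the $C^1$ bound $|\Phi_N(u)-1|\leq |u|\,\|\Phi_N'\|_{\infty}$, and finish with the Laplace-type estimate $\int_{-\pi}^{\pi}e^{\lambda_N(\cos u-1)}\,du\ll \lambda_N^{-1/2}$. The only real difference is at the first step: you rederive the smoothing inequality from scratch via Fourier inversion of the distribution function and the Dirichlet-kernel bound $\pi/|u|$, which yields the prefactor $\tfrac{1}{2}\int_{-\pi}^{\pi}|\cdot|\,du/|u|$, whereas the paper quotes the Esseen-type inequality for integer-valued variables from Petrov with the sharper constant $\tfrac14$. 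That factor of $2$ is precisely what lets the paper land on the clean constant $1$ (via $\tfrac14\sqrt{5\pi}\leq 1$ after using $\cos u-1\leq -u^2/5$), while your version, as you note, stalls at roughly $\sqrt{\pi/2}\,\lambda_N^{-1/2}$ even with the exact Bessel asymptotics; to recover the stated inequality verbatim you should simply cite the $\tfrac14$-form of the smoothing inequality rather than the naive kernel bound. Everything else, including the observation that periodicity upgrades locally uniform $C^1$ convergence to uniform convergence, is sound.
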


\begin{proof}
  We recall the following well-known inequality, which is the ad-hoc
  tool (see, e.g.~\cite[p. 186, 5.10.2]{Petrov95}): if $X$ and $Y$ are
  integer-valued random variables, then
$$
d_{KS}(X,Y)\leq\dfrac{1}{4}
\int_{-\pi}^\pi\left|\dfrac{\expect(e^{iuX})-\expect(e^{iuY})}{u}\right| du.
$$
\par
Let
$$
\psi_N(u)=\expect(e^{iu P_{\lambda_N}}),\quad
\Phi_N(u)=\psi_N(u)^{-1}\expect(e^{iuZ_N}).
$$
\par
From the inequality, we obtain
\begin{align*}
  d_{KS}(Z_N,P_{\lambda_N})&\leq \dfrac{1}{4}\int_{-\pi}^\pi
  {|\expect(e^{iuZ_N})-\psi_N(u)|\frac{du}{u}}\\
  &= \frac{1}{4}
  \int_{-\pi}^{\pi}{\Bigl|\psi_N(u)\frac{\Phi_N(u)-1}{u}\Bigr|du}
\end{align*}
\par
From our stronger assumption of mod-Poisson convergence with $C^1$
convergence, we have a uniform bound
$$
\Bigl|\frac{\Phi_N(u)-1}{u}\Bigr|\leq \|\Phi'_N\|_{\infty},
$$
for $N\geq 1$, hence since $|\Psi_N(u)|=\exp(\lambda_N(\cos u-1))$, we
have
$$
d_{KS}(Z_N,P_{\lambda_N})\leq \frac{\|\Phi'\|_{\infty}}{4}
\int_{-\pi}^{\pi}{e^{\lambda_N(\cos
 u-1)}du}. 
$$
\par
It is well-known that the precise asymptotic of such an integral gives
order of magnitude $\lambda_N^{-1/2}$ for $\lambda_N\ra +\infty$. To
see this quickly, note for instance that $\cos u-1\leq -u^2/5$ on
$[-\pi,\pi]$, hence
$$
\int_{0}^{\pi}{e^{\lambda_N(\cos u-1)}du}\leq
\int_{\pi}^{\pi}{e^{-\lambda_N u^2/5}du}\leq \int_{\Rr}{e^{-\lambda_N
    u^2/5}du}=\sqrt{\frac{5\pi}{\lambda_N}},
$$
which gives the result since $\sqrt{5\pi}/4\leq 1$.
\end{proof}

\begin{rem}
  (1) Hwang~\cite[Th. 1]{hwang} gives this and many other variants for
  other measures of approximation, under the assumption of his version
  of mod-Poisson convergence.  In another work with A. Barbour, we
  consider various refinements and applications of this type of
  statement, including with approximation involving more general
  families of discrete random variables (see~\cite{bkn}).
\par
(2) As a reference for number theorists, note that the existence of
renormalized convergence as in~(\ref{eq-clt}) for an arbitrary
sequence of integer-valued random variables $(Z_N)$, with
$\expect(Z_N)=\lambda_N$, does not imply that the Kolmogorov distance
$d_{KS}(Z_N,P_{\lambda_N})$ converge to $0$: indeed, consider
$$
Z_N=B_1+\cdots+B_N
$$
where the $B_i$ are Bernoulli random variables with
$\proba(B_i=1)=\proba(B_i=0)=\demi$. Then $\lambda_N=\tfrac{N}{2}$,
and the normalized convergence in law~(\ref{eq-clt}) is the Central
Limit Theorem. However, it is known that, for some constant $c>0$, we
have
$$
d_{KS}(Z_N,P_{\lambda_N})\geq c>0
$$
for all $N$ (see, e.g.,~\cite[Th. 2]{barbour-hall}, for the analogue
in total variation distance, which in that case is comparable to the
Kolmogorov distance~\cite[Prop. 1]{roos}).
\end{rem}

\section{Limit theorems with mod-Poisson behavior}
\label{sec-2}

Now we give an analogue of the Poisson convergence in the mod-Poisson
framework.

\begin{prop}\label{prop-poisson-convergence}
  Let $(x_n)$ of positive real numbers with
\begin{equation}\label{eq-mod-poisson-cond}
\sum_{n\geq 1}{x_n}=+\infty,\quad\quad \sum_{n\geq 1}{x_n^2}<+\infty, 
\end{equation}
and let $(B_n)$ be a sequence of independent Bernoulli random variables
with
$$
\proba(B_n=0)=1-x_n,\quad\quad \proba(B_n=1)=x_n.
$$
Then 
$$
Z_N=B_1+\cdots+B_N
$$
has mod-Poisson convergence with parameters
$$
\lambda_N=x_1+\cdots +x_N
$$
and with limiting function given by
$$
\Phi(u)=\prod_{n\geq 1}{(1+x_n(e^{iu}-1))\exp(x_n(1-e^{iu}))},
$$
a uniformly convergent infinite product.
\end{prop}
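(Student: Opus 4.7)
The plan is to compute the characteristic function of $Z_N$ explicitly, divide by the Poisson characteristic function with parameter $\lambda_N$, and then verify that the resulting infinite product converges uniformly on $\mathbf{R}$ by Taylor-expanding each factor.

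First, by independence of the $B_n$,
$$
\expect(e^{iuZ_N})=\prod_{n=1}^N\expect(e^{iuB_n})=\prod_{n=1}^N(1+x_n(e^{iu}-1)).
$$
Multiplying by $\exp(\lambda_N(1-e^{iu}))=\prod_{n=1}^N\exp(x_n(1-e^{iu}))$ yields
$$
\exp(\lambda_N(1-e^{iu}))\,\expect(e^{iuZ_N})
=\prod_{n=1}^N(1+x_n(e^{iu}-1))\exp(x_n(1-e^{iu})),
$$
so the claim reduces to showing that this finite product has a uniform limit (in $u$) as $N\to\infty$, and that the limit equals the announced $\Phi(u)$.

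Next I would establish the uniform convergence of the infinite product. Since $\sum x_n^2<+\infty$ forces $x_n\to 0$, for $n$ sufficiently large we have $|x_n(e^{iu}-1)|\leq 2x_n<1/2$ uniformly in $u\in\RR$, so the principal branch of the logarithm is defined on each factor, and we may write
$$
\log\bigl((1+x_n(e^{iu}-1))\exp(x_n(1-e^{iu}))\bigr)=\log(1+x_n(e^{iu}-1))-x_n(e^{iu}-1).
$$
The Taylor expansion $\log(1+z)=z-z^2/2+O(z^3)$ valid for $|z|\leq 1/2$, combined with $|e^{iu}-1|\leq 2$, gives
$$
\bigl|\log(1+x_n(e^{iu}-1))-x_n(e^{iu}-1)\bigr|\leq C x_n^2
$$
for some absolute constant $C$, uniformly in $u\in\RR$ and in $n$ large. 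The hypothesis $\sum x_n^2<\infty$ then implies that the tail sums $\sum_{n\geq N}\log(\cdots)$ converge to $0$ uniformly in $u$, hence the infinite product converges uniformly on $\RR$ to the continuous function $\Phi(u)$, which is precisely the desired limit.

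The main, and only, obstacle is the routine bookkeeping involved in justifying the use of the logarithm and in extracting the uniform-in-$u$ estimate; once the bound $|e^{iu}-1|\leq 2$ is used to trivialize the $u$-dependence, everything reduces to the summability hypothesis~(\ref{eq-mod-poisson-cond}). Note that the first condition $\sum x_n=+\infty$ is not needed for the mod-Poisson limit itself, but guarantees $\lambda_N\to+\infty$, which is the setting in which the framework is most interesting (as in Proposition~\ref{prop-cor-mod-poisson}).
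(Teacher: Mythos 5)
Your proof is correct and follows essentially the same route as the paper: compute $\expect(e^{iuZ_N})$ by independence, multiply by $\exp(\lambda_N(1-e^{iu}))=\prod\exp(x_n(1-e^{iu}))$, and observe that each factor is $1+O(x_n^2)$ uniformly in $u$, so that $\sum x_n^2<\infty$ gives uniform convergence of the product. The paper states the $1+O(x_n^2)$ bound without spelling out the logarithmic expansion; your extra bookkeeping is just a fuller justification of the same estimate.
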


\begin{proof}
  This is again a quite simple computation.
Indeed, by independence of the variables $B_n$, we have
$$
\exp(\lambda_N(1-e^{iu}))\E(e^{iuZ_N})
=\prod_{n=1}^{N}\exp(x_n(1-e^{iu}))(1+x_n(e^{iu}-1)),
$$
and since
$$
\exp(x_n(1-e^{iu}))(1+x_n(e^{iu}-1))=1+O(x_n^2)
$$
for $u\in\Rr$ and $n\geq 1$ (recall $x_n\ra 0$), it follows
from~(\ref{eq-mod-poisson-cond}) that this product converges locally
uniformly to $\Phi(u)$, which completes the proof.
\end{proof}

\begin{rem}
  More generally, assume that $(X_k^{(n)})$ is a triangular array of
  independent random variables taking values in $\{0,a_1,\ldots,
  a_r\}$, such
  that $$\Prob[X_k^{(n)}=a_i]=x_n^{(i)};\;\;i=1,\ldots,r.$$ Assume
  that for any $i$, $\sum_{n\geq1}x_n^{(i)}=\infty$ and
  $\sum_{n\geq1}(x_n^{(i)})^2<\infty$. Then
  $S_n=X_1^{(n)}+\ldots+X_n^{(n)}$ converges in the mod-Poisson sense
  with parameter $\lambda_N=a_1x_n^{(1)}+\ldots+a_rx_n^{(r)}$.
\end{rem}

\section{Mod-Poisson convergence and the Erd\H{o}s-Kac
  Theorem: a first analogy}\label{sec-nt}

In~\cite[\S 4.3]{jkn}, we gave the first example of mod-Poisson
convergence as explaining (through the Central Limit of
Proposition~\ref{prop-cor-mod-poisson}) the classical result of
Erd\H{o}s and Kac concerning the statistic behavior of the
arithmetic function function $\omega(n)$, the number of (distinct)
prime divisors of a positive integer $n\geq 1$:
\begin{equation}\label{eq-erdos-kac}
\lim_{N\ra +\infty}{
\frac{1}{N}
|\{
n\leq N\,\mid\, 
a<\frac{\omega(n)-\log\log N}{\sqrt{\log\log N}}<b
\}|}
=
\frac{1}{\sqrt{2\pi}}\int_{a}^b{
e^{-t^2/2}dt}
\end{equation}
for any real numbers $a<b$. 
\par
More precisely, with
$$
\omega'(n)=\omega(n)-1,\quad \text{ for } n\geq 2,
$$ 
we showed by a simple application of the Delange-Selberg method (see,
e.g.,~\cite[II.5, Theorem 3]{tenenbaum}) that for any $u\in \RR$, we
have
$$
\lim_{N\ra +\infty}{ \frac{ (\log N)^{(1-e^{iu})}}{N}\sum_{2\leq n\leq
    N}{e^{iu\omega'(n)}}}=\Phi(u),
$$
and the convergence is uniform, with
\begin{equation}\label{eq-phi-omega}
  \Phi(u)=\frac{1}{\Gamma(e^{iu}+1)}\prod_p{
  \Bigl(1-\frac{1}{p}\Bigr)^{e^{iu}}
  \Bigl(1+\frac{e^{iu}}{p-1}\Bigr)
},
\end{equation}
where the Euler product is absolutely and uniformly convergent: this
means mod-Poisson convergence with parameters $\lambda_N=\log\log
N$. By Proposition~\ref{prop-cor-mod-poisson}, (2), this
implies~(\ref{eq-erdos-kac}).\footnote{\ As we observed, this gives
  essentially the proof of the Erd\H{o}s--Kac theorem due to R\'enyi
  and Tur\'an~\cite{renyi-turan}. For another recent simple proof,
  see~\cite{granville-sound}.} To illustrate what extra information is
contained in mod-Poisson convergence we make two remarks: first, by
putting $u=\pi$, for instance, we get
$$
\sum_{1\leq n\leq N}{(-1)^{\omega(n)}}=o\Bigl(\frac{N}{(\log N)^2}\Bigr),
$$
as $N\ra +\infty$ (since $1/\Gamma(1+e^{i\pi})=0$), which is a
statement well-known to be equivalent to the Prime Number
Theorem. Secondly, more generally, we can apply results like
Proposition~\ref{th-ks-distance} (which is easily checked to be
applicable here) to derive Poisson-approximation results for
$\omega(n)$ which are much more precise than the renormalized Gaussian
behavior (see also~\cite[\S 4]{hwang} and~\cite[\S 6.1]{tenenbaum} for
the discussion of the classical work of Sath\'e and Selberg).
\par
\medskip
\par
We wish here to bring to light the very interesting, and very
complete, analogy between the probabilistic structure of this
mod-Poisson version of the Erd\H{o}s-Kac Theorem and the
mod-Gaussian conjecture for the distribution of the values
$L$-functions, taking as basic example the conjecture for the
distribution of $\log |\zeta(1/2+it)|$, which follows from the
Keating-Snaith moment conjectures for the Riemann zeta function
(see~\cite{jkn},~\cite{keating-snaith}).
\par
We start with the observation, following from~(\ref{eq-phi-omega}),
that the limiting function $\Phi(u)$ in the Erd\H{o}s-Kac Theorem
takes the form of a product $\Phi(u)=\Phi_1(u)\Phi_2(u)$ with
$$
\Phi_1(u)=\frac{1}{\Gamma(e^{iu}+1)},\quad\quad \Phi_2(u)= \prod_{p}{
  \Bigl(1-\frac{1}{p}\Bigr)^{e^{iu}}
  \Bigl(1+\frac{e^{iu}}{p-1}\Bigr) }.
$$
\par
We compare this with the Moment Conjecture in the mod-Gaussian form,
namely, if $U$ is uniformly distributed on $[0,T]$, it is expected
that
\begin{equation}\label{eq-moments-conj}
\lim_{T\ra +\infty}{e^{u^2 \log\log T}\expect(e^{iu\log
    |\zeta(1/2+iU)|^2})}=
\Psi_1(u)\Psi_2(u),
\end{equation}
for all $u\in\Rr$ (locally uniformly) where
\begin{align}
  \Psi_1(u)&= \frac{G(1+iu)^2}{G(1+2iu)},\\
  \intertext{($G(z)$ is the Barnes double-gamma function, see
    e.g.~\cite[Ch. XII, Misc. Ex. 48]{ww}), and}
 \Psi_2(u)&=\prod_{p}{
    \Bigl(1-\frac{1}{p}\Bigr)^{-u^2}\Bigl\{ \sum_{m\geq
      0}{\Bigl(\frac{\Gamma(m+iu)}{m!\Gamma(\lambda)}\Bigr)^2
      p^{-m}}\Bigr\}}.
\end{align}
\par
Here also, the limiting function splits as a product of two terms, and
each appears individually as limit in a distinct mod-Gaussian
convergence. Indeed, we first have
$$
\Psi_1(u)=\lim_{N\ra +\infty}{e^{u^2(\log N)}
\expect(e^{iu\log |\det(1-X_N)|^2})},
$$
where $X_N$ is a Haar-distributed $U(N)$-valued random
variable. Secondly (see~\cite[4.1]{jkn}), we have
$$
\Psi_2(u)=\lim_{N\ra+\infty}{
e^{u^2(\log (e^{\gamma}\log N))}\expect(e^{iuL_N})}
$$
where
$$
L_N=\sum_{p\leq N}{\log
  \Bigl|1-\frac{e^{i\theta_p}}{\sqrt{p}}\Bigr|^2},
$$
for any sequence $(\theta_p)_{p\leq N}$ of independent random
  variables, uniformly distributed on $[0,1]$.
\par
\begin{rem}
  Note in passing that for fixed $p$, the $p$-th component of the
  Euler product of $\zeta(1/2+iU)$, for $U$ uniformly distributed on
  $[0,T]$, converges in law to $(1-e^{i\theta_p}p^{-1/2})^{-1}$ as
  $T\ra +\infty$.
\end{rem}
\par
We now prove that the Euler product $\Phi_2$ (like $\Psi_2$)
corresponds to mod-Poisson convergence for a natural asymptotic
probabilistic model of primes, and that $\Phi_1$ (like $\Psi_1$) comes
from a model of group-theoretic origin.\footnote{\ Since a product of
  two limiting functions for mod-Poisson convergence is clearly
  another such limiting function, we also recover without arithmetic
  the fact that the limiting function $\Phi(u)$ arises from
  mod-Poisson convergence.}
\par
We start with the Euler product, where the computation was already
described in~\cite[\S 4.3]{jkn}: we have
$$
\Phi_2(u)=\lim_{y\ra +\infty}{\prod_{p\leq y}{
    \Bigl(1-\frac{1}{p}\Bigr)^{e^{iu}-1}
\Bigl(1-\frac{1}{p}\Bigr)
\Bigl(1+\frac{e^{iu}}{p-1}\Bigr)
}},
$$
and by isolating the first term, it follows that
\begin{align*}
\Phi_2(u)&=\lim_{y\ra +\infty}{
\exp((1-e^{iu})\lambda_y)
\prod_{p\leq y}{\Bigl(1-\frac{1}{p}+\frac{1}{p}e^{iu}\Bigr)
}}\\
&=\lim_{y\ra +\infty}{\E(e^{iuP_{\lambda_y}})^{-1}}
\E(e^{iuZ'_y})
\end{align*}
where
$$
\lambda_y=\sum_{p\leq y}{\log \Bigl(\frac{1}{1-p^{-1}}\Bigr)}=
\sum_{\stacksum{p\leq y}{k\geq 1}}{
\frac{1}{kp^k}}=\log\log y+\kappa+o(1),
$$
as $y\ra +\infty$, for some real constant $\kappa$ (see,
e.g.,~\cite[\S 22.8]{hardy-wright}), and
\begin{equation}\label{eq-zy}
Z'_y=\sum_{p\leq y}{B'_{p}}
\end{equation}
is a sum of independent Bernoulli random variables with parameter
$1/p$:
$$
\proba(B'_{p}=1)=\frac{1}{p},\quad\quad
\proba(B'_{p}=0)=1-\frac{1}{p}.
$$
\par
We note that this is a particular case of
Proposition~\ref{prop-poisson-convergence}, and that (as expected) the
parameters of these Bernoulli laws correspond exactly to the
``intuitive'' probability that an integer $n$ be divisible by $p$, or
equivalently, the Bernoulli variable $B'_p$ is the limit in law as
$N\ra +\infty$ of the random variables defined as the indicator of a
uniformly chosen integer $n\leq N$ being divisible by $p$; the
independence of the $B'_p$ corresponds for instance to the formal
(algebraic) independence of the divisibility by distinct primes given,
e.g., by the Chinese Remainder Theorem.
\par
As in the case of the Riemann zeta function, we also note that the
independent model fails to capture the truth on the distribution of
$\omega(n)$, the extent of this failure being measured, in some sense,
by the factor $\Phi_1(u)$. Because
$$
\frac{Z'_y-\log \log y}{\sqrt{\log\log y}}\convlaw \mathcal{N}(0,1),
$$
this discrepancy between the independent model and the arithmetic
truth is invisible at the level of the normalized convergence in
distribution (as it is for $\log|\zeta(1/2+it)|$, by Selberg's Central
Limit Theorem, hiding the Random Matrix Model).
\par
Now we consider the first factor
$\Phi_1(u)=\Gamma(e^{iu}+1)^{-1}$. Again, in~\cite[\S 4.3]{jkn}, we
appealed to the formula
$$
\frac{1}{\Gamma(e^{iu}+1)}=\prod_{k\geq 1}{
  \Bigl(1+\frac{e^{iu}}{k}\Bigr)
  \Bigl(1+\frac{1}{k}\Bigr)^{-e^{iu}}
}
$$
for $u\in\Rr$ (see~\cite[12.11]{ww}) to compute
\begin{align*}
  \Phi_1(u)&=\lim_{N\ra +\infty}{ \prod_{k\leq N}{
      \Bigl(1+\frac{1}{k}\Bigr)^{1-e^{iu}}
      \Bigl(1+\frac{1}{k}\Bigr)^{-1} \Bigl(1+\frac{e^{iu}}{k}\Bigr) }
  }\\
  &= \lim_{N\ra +\infty}\exp(\lambda_N(1-e^{iu}))\prod_{k\leq N}{
    \Bigl(1+\frac{1}{k}\Bigr)^{-1} \Bigl(1+\frac{e^{iu}}{k}\Bigr)
  }\\
  &=\lim_{N\ra +\infty}\exp(\lambda_N(1-e^{iu})) \expect(e^{iu Z_N}),
\end{align*}
where 
$$
\lambda_N=\sum_{1\leq k\leq N}{\log (1+k^{-1})}=\log (N+1),
$$
and $Z_N$ is the sum 
$$
Z_N=B_{1}+B_{2}+\cdots +B_{N},
$$
with $B_{k}$ denoting independent Bernoulli random variables with
distribution
$$
\proba(B_{k}=1)=1-\frac{1}{1+\frac{1}{k}}=\frac{1}{k+1},\quad\quad
\proba(B_{k}=0)=\frac{1}{1+\frac{1}{k}}=\frac{k}{k+1}.
$$
\par
The group-theoretic interpretation of this distribution is very
suggestive: indeed, it is the distribution of the random variable
$\varpi(\sigma_{N+1})-1$, where $\sigma_{N+1}\in \sym_{N+1}$ is
distributed according to the uniform measure on the symmetric group,
and we recall that $\varpi(\sigma)$ is the number of cycles of a
permutation. In other words, we have
\begin{equation}\label{eq-char-permut}
\expect(e^{iu\varpi(\sigma_N)})=\prod_{1\leq j\leq N}{
\Bigl(1-\frac{1}{j}+\frac{e^{iu}}{j}\Bigr)},
\end{equation}
as proved, e.g., in~\cite[\S 4.6]{abt}; note that this is not obvious,
and the decomposition as a sum of independent random variables is due
to Feller, and is explained in~\cite[p. 16]{abt}.
\par
So we see -- and this gives another example of natural mod-Poisson
convergence -- that these random variables have mod-Poisson convergence
with parameters $\log N$, and limiting function $1/\Gamma(e^{iu})$:
\begin{equation}\label{eq-mod-poisson-permut}
\lim_{N\ra +\infty}\exp((\log N)(1-e^{iu}))\expect(e^{iu\varpi(\sigma_N)})=
\frac{1}{\Gamma(e^{iu})}.
\end{equation}
\par
For further reference, we state a more precise version, which follows
from~(\ref{eq-char-permut}):
\begin{equation}\label{eq-explicit-permut}
\expect(e^{iu\varpi(\sigma_N)})=
\frac{1}{\Gamma(e^{iu})}\exp((\log N)(e^{iu}-1))\erreurm{\frac{1}{N}},
\end{equation}
locally uniformly for $u\in\Rr$. Note that this includes the special
case $u=(2k+1)\pi$ where 
$$
\expect(e^{iu\varpi(\sigma_N)})=
\frac{1}{\Gamma(e^{iu})}=0.
$$
\par
\par
This explanation of the ``transcendental'' factor $1/\Gamma(e^{iu}+1)$
is particularly convincing because of well-known and well-studied
analogies between the cycle structure of random permutations and the
factorization of integers (see, e.g., the discussion in~\cite[\S
1.2]{abt} or the entertaining survey~\cite{granville}). Its origin
in~\cite[4.3]{jkn} is, however, not very enlightening: the Gamma
function appears universally in the Delange-Selberg method in a way
which may seem to be coincidental and unrelated to any group-theoretic
structure (see, e.g.,~\cite[\S 5.2]{tenenbaum} where it originates in
a representation of $1/\Gamma(z)$ as a contour integral of Hankel
type).

\section{The analogy deepens}\label{sec-finite-fields}

The discussion of the previous section is already interesting, but it
becomes (to our mind) even more intriguing after one notes how the
analogy can be extended by including consideration of function
field situations, as in the work of Katz-Sarnak~\cite{katzsarnak}.
\par
Let $\Fp_q$ be a finite field with $q=p^n$ elements, with $n\geq 1$
and $p$ prime.  For a polynomial $f\in \Fp_q[X]$, let
$$
\omega(f)=\omega_q(f)=|\{\irred\in \Fp_q[X]\,\mid\, \irred\text{ is
  irreducible monic and divides } f\}|
$$
be the analogue of the number of prime factors of an integer (we will
usually drop the subscript $q$).
\par
We consider the statistic behavior of this function under two types of
limits: (i) either $q$ is replaced by $q^m$, $m\ra +\infty$, and $f$
is assumed to range over monic polynomials of fixed degree $d\geq 1$
in $\Fp_{q^m}[X]$; or (ii) $q$ is fixed, and $f$ is assumed to range
over monic polynomials of degree $d\ra +\infty$ in $\Fp_q[X]$.
\par
The first limit, of fixed degree and increasing base field, is similar
to the one considered by Katz and Sarnak for the distribution of zeros
of families of $L$-functions over finite fields~\cite{katzsarnak}. And
the parallel is quite precise as far as the group-theoretic situation
goes. Indeed, recall that the crucial ingredient in their work is that
the Frobenius automorphism provides in a natural way a ``random
matrix'' for a given $L$-function, the characteristic polynomial of
which provides a spectral interpretation of the zeros (see,
e.g.,~\cite[\S 4.2]{jkn} for a partial, down-to-earth, summary).
\par
In our case, let us assume first that $f\in \Fp_{q}[X]$ is
squarefree. Let $K_f$ denote the splitting field of $f$, i.e., the
extension field of $\Fp_{q}$ generated by the $d$ roots of $f$, and
let $F_f$ denote the Frobenius automorphism $x\mapsto x^q$ of
$K_f$. This automorphism permutes the roots of $f$, which all lie in
$K_f$, and after enumerating them, leads to an element of $\sym_d$,
denoted $F_f$. This depends on the enumeration of the roots, but the
conjugacy class $F_f^{\sharp}\in\sym_d^{\sharp}$ is well-defined.
\par
Now, by the very definition, we have
\begin{equation}\label{eq-ell-om}
\omega(f)=\varpi(F_f^{\sharp}),
\end{equation}
which can be seen as the (very simple) analogue of the spectral
interpretation of an $L$-function as the characteristic polynomial of
the Frobenius endomorphism.

\begin{rem}
\label{rm-scheme}
We can come even closer to the Katz-Sarnak setting of families of
$L$-functions. Consider, in scheme-theoretic language,\footnote{\
  Readers unfamiliar with this language can skip this remark, which
  will not be used, except to state Theorem~\ref{th-katz-sarnak}
  below.}  the (very simple!) family of zeta functions of the
zero-dimensional schemes $X_f=\spec(\Fp_q[X]/(f))$, i.e., the
varieties over $\Fp_q$ with equation $f(x)=0$. These zeta functions
are defined by either of the following two formulas:
$$
Z(X_f)=\prod_{x\in |X_f|}{(1-T^{\deg(x)})^{-1}}=\exp\Bigl(
\sum_{m\geq 1}{\frac{|X_f(\Fp_{q^m})|T^m}{m}}
\Bigr),
$$
where $|X_f|$ is the set of closed points of $X_f$. Since these
correspond naturally to irreducible factors of $f$ (without
multiplicity), it follows that
$$
Z(X_f)=\prod_{\irred\mid f}{(1-T^{\deg(\irred)})^{-1}},
$$
and hence, if $f$ is squarefree, a higher-level version
of~(\ref{eq-ell-om}) is the ``spectral interpretation''
\begin{equation}\label{eq-zeta-spectral}
Z(X_f)=\det(1-F_fT|H^0_c(\bar{X}_f,\mathbf{Q}_{\ell}))^{-1}=
\det(1-\rho(F_f)T)^{-1}
\end{equation}
where $F_f$ is still the Frobenius automorphism,
$H^0_c(\bar{X}_f,\mathbf{Q}_{\ell})$ is simply isomorphic with
$\Qq_{\ell}^{\deg(f)}$ (the variety over the algebraic closure has
$\deg(f)$ connected components, which are points), and $\rho$ is the
natural faithful representation of $\mathfrak{S}_{\deg(f)}$ in
$U(\deg(f),\Cc)$ by permutation matrices, since this is quite clearly
how $F_f$ acts on the \'etale cohomology space.
\par
Looking at the order of the pole of $Z(X_f)$ at $T=1$, we
recover~(\ref{eq-ell-om}). In particular, the generalizations of the
Erd\H{o}s-Kac Theorem that we will prove in the next section can be
interpreted as describing the limiting statistical behavior, in
mod-Poisson sense, of the order of the pole of those zeta functions as
the degree $\deg(f)$ tends to infinity (see
Theorem~\ref{th-katz-sarnak}). It is truly a zero-dimensional version
of the Katz-Sarnak problematic for growing conductor. (Note that this
interpretation also suggests to look at other distribution statistics
of these zeta functions, and we hope to come back to this).
\end{rem}
\par
\medskip
\par 
The relation~(\ref{eq-ell-om}) (or~(\ref{eq-zeta-spectral})) explains
the existence of a link between the number of irreducible factors of
polynomials and the number of cycles of permutations.  Indeed, the
other essential number-theoretic ingredient for Katz and Sarnak is
Deligne's Equidistribution Theorem, which shows that the matrices
given by the Frobenius, \emph{in the limit under consideration} where
$q$ is replaced by $q^m$, $m\ra +\infty$, become equidistributed in a
certain monodromy group. Here we have, exactly similarly, the
following well-known:
\par
\medskip
\par
\textbf{Fact.} In the limit of fixed $d$ and $m\rightarrow +\infty$,
for $f$ uniformly chosen among monic squarefree polynomials of degree
$d$ in $\Fp_{q^m}[X]$, the conjugacy classes $F_f^{\sharp}$ become
uniformly distributed in $\sym_d^{\sharp}$ for the natural (Haar)
measure.
\par
\medskip
This fact is easily proved from the well-known Gauss-Dedekind
formula
$$
\Pi_q(d)=
\sum_{\deg(\irred)=d}{1}=
\frac{1}{d}\sum_{\delta\mid d}{\mu(\delta)q^{d/\delta}}=
\frac{q^{d}}{d}+O(q^{d/2})
$$
for the number of irreducible monic polynomials of degree $d$ with
coefficients in $\Fp_q$, and it is a ``baby'' analogue of Deligne's
Equidistribution Theorem.\footnote{\ Indeed, it could be proved using
  the Chebotarev density theorem, which is a special case of Deligne's
  theorem.} Hence, we obtain
$$
\omega(f)\convlaw \varpi(\sigma_{d}),
$$
as $m\ra +\infty$, where $f$ is distributed uniformly among monic
polynomials of degree $d$ in $\Fp_{q^m}[X]$, and $\sigma_d$ is
distributed uniformly among $\sym_d$.
\par
The second limit, where the base field $\Fp_q$ is fixed and the degree
$d$ grows, is analogue of the problematic situation of families of
curves of increasing genus over a fixed finite field (see the
discussion in~\cite[p. 12]{katzsarnak}), and -- for our purposes -- of
the distribution of the number of prime divisors of integers, which we
discussed in the previous section. In the next section, we prove a
mod-Poisson form of the Erd\H{o}s-Kac theorem in $\Fp_q[X]$ (the
Central Limit version being a standard result, essentially due to
M. Car, and apparently stated first by Flajolet and Soria~\cite[\S 3,
Cor. 1]{flajolet-soria}; see also the recent quick derivation by
R. Rhoades~\cite{rhoades}).

\begin{rem}
  One may extend the conjugacy class $F_f^{\sharp}\in \sym_d^{\sharp}$
  to all $f\in\Fp_q[X]$ of degree $d$, in the following directly
  combinatorial way (which hides the Frobenius aspect): $F_f^{\sharp}$
  is the conjugacy class of permutations with as many disjoint
  $j$-cycles, $1\leq j\leq d$, as there are irreducible factors of $f$
  of degree $j$. However, the relation
  $\omega(f)=\varpi(F_f^{\sharp})$ does \emph{not} extend to this
  case, since multiple factors are not counted by $\omega$. However,
  we have $\Omega(f)=\varpi(F_f^{\sharp})$, where $\Omega(f)$ is the
  number of irreducible factors counted with multiplicity.
\end{rem}

\section{Mod-Poisson convergence for the number of irreducible factors
  of a polynomial}\label{sec-main}

In this section, we state and prove the mod-Poisson form of the
analogue of the Erd\H{o}s-Kac Theorem for polynomials over finite
fields, trying to bring to the fore the probabilistic structure
suggested in the previous section. 

\begin{thm}\label{th-main}
  Let $q\not=1$ be a power of a prime $p$, and let $\omega(f)$ denote
  as before the number of monic irreducible polynomials dividing $f\in
  \Fp_q[X]$. Write $|g|=q^{\deg(g)}=|\Fp_q[X]/(g)|$ for any non-zero
  $g\in \Fp_q[X]$.
\par
For any $u\in\RR$, we have
\begin{equation}\label{eq-main}
\lim_{d\ra +\infty}\frac{\exp((1-e^{iu})\log d)}{q^d}
\sum_{\deg(f)=d}
{e^{iu (\omega(f)-1)}}=
\tilde{\Phi}_1(u)\tilde{\Phi}_2(u),
\end{equation}
where 
\begin{equation}\label{eq-phi1}
\tilde{\Phi}_1(u)=\frac{1}{\Gamma(e^{iu}+1)}
\end{equation}
and
\begin{equation}\label{eq-phi2}
\tilde{\Phi}_2(u)=\prod_{\irred}{
\Bigl(1-\frac{1}{\nirred}\Bigr)^{e^{iu}}
\Bigl(1+\frac{e^{iu}}{\nirred-1}\Bigr) },
\end{equation}
the product running over all monic irreducible polynomials $\irred\in
\Fp_q[X]$ and the sum over all monic polynomials $f\in \Fp_q[X]$ with
degree $\deg(f)=d$. Moreover, the convergence is uniform.
\end{thm}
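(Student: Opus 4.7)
My strategy is to apply singularity analysis to the ordinary generating function
$$F(u,T) = \sum_{f}e^{iu\omega(f)}T^{\deg f} = \prod_{\irred}\Bigl(1 + \frac{e^{iu}T^{\deg\irred}}{1-T^{\deg\irred}}\Bigr),$$
where $f$ ranges over monic polynomials in $\Fp_q[X]$ and $\irred$ over monic irreducibles, so that the inner sum in~\eqref{eq-main} equals $e^{-iu}[T^d]F(u,T)$. The plan is to isolate the singularity at $T=1/q$ by writing
$$F(u,T) = (1-qT)^{-e^{iu}}\,H(u,T),\qquad H(u,T) := \prod_{\irred}(1-T^{\deg\irred})^{e^{iu}-1}\bigl(1-(1-e^{iu})T^{\deg\irred}\bigr),$$
using the Euler product $(1-qT)^{-1}=\prod_{\irred}(1-T^{\deg\irred})^{-1}$ for the zeta function of the affine line $\Aa^1$ over $\Fp_q$. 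This is the function-field analogue of the classical Selberg-Delange factorisation for $\sum_n z^{\omega(n)}n^{-s}$.

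The first technical step is to prove that $H(u,T)$ is holomorphic in some disk $|T|<\rho$ with $\rho>1/q$, and to evaluate $H(u,1/q)$. Expanding the logarithm of the $\irred$-th factor as a power series in $T^{\deg\irred}$, the coefficient of $T^{\deg\irred}$ equals $-(e^{iu}-1)-(1-e^{iu})=0$ (this cancellation is precisely why $(1-qT)^{-e^{iu}}$ is the right factor to pull out), so the logarithm reduces to a double series whose general term is $O(T^{k\deg\irred})$ with $k\geq 2$. The Gauss-Dedekind count $\Pi_q(d)=q^d/d+O(q^{d/2})$ then shows this series converges absolutely and locally uniformly in $u\in\Rr$ for $|T|\leq\rho$ with any $\rho<q^{-1/2}$, which gives the required analyticity. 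Setting $T=1/q$, so $T^{\deg\irred}=1/\nirred$, the direct rearrangement
$$(1-\nirred^{-1})^{e^{iu}-1}\bigl(1-(1-e^{iu})\nirred^{-1}\bigr) = (1-\nirred^{-1})^{e^{iu}}\Bigl(1+\frac{e^{iu}}{\nirred-1}\Bigr)$$
identifies $H(u,1/q)=\tilde\Phi_2(u)$ exactly.

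Having isolated the singularity, I would then invoke the Flajolet-Odlyzko transfer theorem (equivalently, a Hankel contour around the branch point at $T=1/q$, as in the classical Selberg-Delange method) to obtain
$$[T^d]F(u,T) = \frac{\tilde\Phi_2(u)}{\Gamma(e^{iu})}\,q^d\, d^{e^{iu}-1}(1+o(1))$$
as $d\to\infty$, locally uniformly for $u\in\Rr$. Dividing by $q^d$, multiplying by $e^{-iu}$ (which replaces $\omega(f)$ with $\omega(f)-1$) and by the mod-Poisson normaliser $\exp((1-e^{iu})\log d)=d^{1-e^{iu}}$, the limit becomes
$$\frac{e^{-iu}\tilde\Phi_2(u)}{\Gamma(e^{iu})} = \frac{\tilde\Phi_2(u)}{\Gamma(e^{iu}+1)} = \tilde\Phi_1(u)\tilde\Phi_2(u),$$
which is precisely~\eqref{eq-main}; since both sides of~\eqref{eq-main} are continuous and $2\pi$-periodic in $u$, local uniform convergence upgrades to uniform convergence on all of $\Rr$. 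I expect the main obstacle to be establishing uniformity in $u$ of the remainder in the Hankel-contour step, since the implied constants depend on the exponent $e^{iu}$ (which degenerates as $e^{iu}$ approaches a non-positive integer, i.e., near $u=\pi\pmod{2\pi}$, consistently with the Prime Number Theorem for $\Fp_q[X]$) and on the supremum of $|H(u,T)|$ along the chosen contour. The probabilistic content promised in the introduction is then transparent from the factorisation: $(1-qT)^{-e^{iu}}=\sum_d \E(e^{iu\varpi(\sigma_d)})(qT)^d$ encodes the cycle numbers of uniform random permutations, contributing the transcendental factor $\tilde\Phi_1(u)=1/\Gamma(e^{iu}+1)$ of~\eqref{eq-mod-poisson-permut}, while $H(u,1/q)=\tilde\Phi_2(u)$ is the Euler product arising from the Bernoulli model in which each irreducible $\irred$ independently divides $f$ with probability $1/\nirred$, in exact analogy with~\eqref{eq-zy} for primes in $\ZZ$.
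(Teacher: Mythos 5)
Your proposal is correct in outline, but it is a genuinely different proof from the one in the paper --- in fact it is precisely the ``shorter argument'' via the Delange--Selberg/Flajolet--Soria method that the authors explicitly mention and deliberately decline to use. Your computations check out: the local factor of $F(u,T)$ is $(1-(1-e^{iu})T^{\deg\irred})/(1-T^{\deg\irred})$, the linear term in the logarithm of each local factor of $H$ does cancel, so $H(u,\cdot)$ is holomorphic in $|T|<q^{-1/2}$ (a disk strictly containing $1/q$, which makes the transfer step easy --- one can even avoid Hankel contours by expanding $H$ as a power series with geometrically decaying coefficients and convolving with $[T^{d-k}](1-qT)^{-e^{iu}}=\binom{d-k+e^{iu}-1}{d-k}q^{d-k}$), and the evaluation $H(u,1/q)=\tilde\Phi_2(u)$ and the final bookkeeping with $e^{-iu}/\Gamma(e^{iu})=1/\Gamma(e^{iu}+1)$ are right. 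The uniformity issue you flag near $u\equiv\pi$ is real but standard: the relative error in the transfer is $O(1/d)$ uniformly, so the normalized quantity still converges (to $0$ there). The paper instead proceeds combinatorially: it factors each $f$ as $gh$ with $g$ carrying the irreducible factors of degree $\leq b$ and $h$ the rest, proves a quantitative equidistribution statement for the Frobenius conjugacy class $F_h^{\sharp}$ among conjugacy classes of permutations with no cycle of length $\leq b$, and then evaluates $\expect(e^{iu\varpi(\sigma_d)}\charfun_{\ellm{\sigma_d}>b})$ by an inclusion--exclusion over $b$-cycles. What that buys --- and what your route hides inside a Hankel contour --- is the structural origin of the two factors: $\tilde\Phi_1$ arises from genuine equidistribution of Frobenius in the symmetric group (a zero-dimensional Katz--Sarnak statement), and $\tilde\Phi_2$ from the independent Bernoulli model for divisibility by small irreducibles. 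Your closing observation that $(1-qT)^{-e^{iu}}=\sum_d\expect(e^{iu\varpi(\sigma_d)})(qT)^d$ partially recovers this interpretation a posteriori, but only as an identity of generating functions, not as an equidistribution theorem; what your approach buys in exchange is brevity and effortless access to sharper error terms.
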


\begin{rem}
Note the similarity of the shape of the limiting function
with that in~(\ref{eq-phi-omega}) and the conjecture for $\zeta(1/2+it)$, in
particular the fact that the group-theoretic term is the same as for
$\omega(n)$, while the Euler product is a direct transcription in
$\Fp_q[X]$ of the earlier $\Phi_2$. 
\end{rem}

\begin{rem}
This can be rephrased, according to Remark~\ref{rm-scheme}, in the
following manner which illustrates the analogy with the Katz-Sarnak
philosophy: 
\begin{thm}\label{th-katz-sarnak}
  Let $q\not=1$ be a power of a prime. For any $f\in\Fp_q[X]$, monic
  of degree $\geq 1$, let $X_f$ be the zero-dimensional scheme
  $\spec(\Fp_q[X]/(f))$, let $Z(X_f)\in \Qq(T)$ denote its zeta
  function and let $r(X_f)\geq 0$ denote the order of the pole of
  $Z(X_f)$ at $T=1$. Then for any $u\in\RR$, we have
$$
\lim_{d\ra +\infty}\frac{\exp((1-e^{iu})\log d)}{q^d}
\sum_{\deg(f)=d}
{e^{iu r(f)}}=
e^{-iu}\tilde{\Phi}_1(-u)\tilde{\Phi}_2(-u),
$$
with notation as before.
\end{thm}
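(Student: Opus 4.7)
The plan is to extract the asymptotics of $D_d(z) := \sum_{\deg f = d} z^{\omega(f)}$, with $z = e^{iu}$ and $w = 1-z$, through a generating-function identity that exposes both the random-permutation and local-prime contributions to the limit. Unique factorisation in $\Fp_q[X]$ gives the Euler product
\[
D(z,T) := \sum_{d \geq 0} D_d(z) T^d = \prod_\pi \frac{1 - wT^{\deg\pi}}{1-T^{\deg\pi}} = (1-qT)^{-1}\prod_\pi(1-wT^{\deg\pi}),
\]
using $\prod_\pi(1-T^{\deg\pi})^{-1} = (1-qT)^{-1}$. The Prime Polynomial Theorem $\Pi_q(d) = q^d/d + O(q^{d/2}/d)$ shows that $A(T) := \sum_\pi T^{\deg\pi}$ equals $-\log(1-qT)$ plus an analytic remainder on $|T| < q^{-1/2}$, which I use to isolate the $k=1$ term in $\log\prod_\pi(1-wT^{\deg\pi}) = -\sum_{k \geq 1} w^k A(T^k)/k$ and arrive at the clean factorisation
\[
D(z,T) = (1-qT)^{-z}\, H(T, w),
\]
with $H(T,w)$ holomorphic for $|T| < q^{-1/2}$ and entire in $w$.

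The crucial observation is that the singular factor is itself the generating function of the permutation statistics: from~(\ref{eq-char-permut}) one has $\expect(z^{\varpi(\sigma_N)}) = (z)_N/N!$ where $(z)_N = z(z+1)\cdots(z+N-1)$, and the binomial series gives
\[
(1-qT)^{-z} = \sum_{N \geq 0} q^N \expect(z^{\varpi(\sigma_N)}) T^N.
\]
Writing $H(T,w) = \sum_j h_j(w) T^j$ and convolving coefficients therefore yields the central identity
\[
\frac{D_d(z)}{q^d} = \sum_{j=0}^d h_j(w)\, q^{-j}\, \expect\bigl(z^{\varpi(\sigma_{d-j})}\bigr),
\]
in which the local-prime and permutation aspects are transparently separated.

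The rest is asymptotic bookkeeping. Analyticity of $H$ on $|T| < q^{-1/2}$ yields $|h_j(w)| \ll r^{-j}$ for any fixed $r < q^{-1/2}$, uniformly for $w$ bounded; combined with $q^{-j}$, this makes the tail geometric with ratio $(rq)^{-1} < 1$. Invoking the precise form~(\ref{eq-explicit-permut}), $\expect(z^{\varpi(\sigma_N)}) = \tilde\Phi_1(u)\, e^{iu}\, N^{z-1}(1+O(1/N))$ uniformly in $u$, and substituting gives
\[
\frac{D_d(z)}{q^d} = d^{z-1}\, e^{iu}\, \tilde\Phi_1(u)\, H(1/q, w) + O(d^{-1}\log d),
\]
uniformly in $u \in \Rr$. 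A direct computation from the formula $\log H(T,w) = \sum_{k \geq 2} k^{-1}(w - w^k) A(T^k)$ specialised at $T = 1/q$, compared with the analogous expansion of $\log\tilde\Phi_2(u) = \log\prod_\pi (1-|\pi|^{-1})^{-w}(1 - w|\pi|^{-1})$, identifies $H(1/q, w) = \tilde\Phi_2(u)$. Multiplying through by $e^{-iu}\exp((1-z)\log d) = e^{-iu}d^{1-z}$ then yields the claim.

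The main technical delicacy will be securing uniform-in-$u$ control in the asymptotic step: a uniform Cauchy-type bound on the $h_j(w)$ along the compact curve $w = 1 - e^{iu}$, and an error term in~(\ref{eq-explicit-permut}) robust enough that the shift $d \mapsto d-j$ across the truncation range $j \leq C\log d$ is harmless. Some vigilance is also required at the exceptional points $u = (2k+1)\pi$, where $\tilde\Phi_1$ vanishes through the pole of $\Gamma$: the full asymptotic must predict this vanishing consistently, as the finite-field mirror of the observation, made after~(\ref{eq-phi-omega}), that at $u = \pi$ the mod-Poisson limit encodes the Prime Number Theorem.
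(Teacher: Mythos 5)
Your route is sound, but it is genuinely different from the paper's. The paper proves Theorem~\ref{th-katz-sarnak} in one line from Theorem~\ref{th-main}, via the identity $r(f)=\omega(f)$ (you use this silently and should state it, since your $D_d(z)$ is a sum over $z^{\omega(f)}$ while the statement concerns $r(f)$); all the work is in Theorem~\ref{th-main}, whose proof splits $f=gh$ into small and large irreducible parts, transfers the sum over the large parts to an average over permutations with no short cycles using the quantitative equidistribution of the Frobenius classes $F_f^{\sharp}$ (Lemma~\ref{lm-distrib}(3)), and evaluates that average by an inclusion--exclusion on $b$-cycles (Proposition~\ref{pr-permut}). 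You instead factor the bivariate generating function as $(1-qT)^{-z}H(T,w)$ and expand the singular factor by the binomial series, recognizing its coefficients as $q^N\expect(z^{\varpi(\sigma_N)})=q^N(z)_N/N!$, so that $q^{-d}D_d(z)$ is a convolution of $h_j(w)q^{-j}$ against the permutation characteristic functions. This is essentially the Selberg--Delange/Flajolet--Soria route that the paper explicitly declines to take, but your binomial-series step recovers much of the probabilistic content that route usually hides: the factor $1/\Gamma$ enters through $\expect(z^{\varpi(\sigma_N)})$ rather than through a Hankel contour. Your argument is shorter, needs nothing beyond $\Pi_q(d)=q^d/d+O(q^{d/2})$, and as a byproduct the identity $B(T)=-\sum_{k\geq 2}A(T^k)/k$ (where $A(T)=-\log(1-qT)+B(T)$) gives the sharp Mertens formula~(\ref{eq-mertens2}) directly, instead of via the consistency argument of Remark~\ref{rm-mertens}. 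What it does not deliver is the interpretation of $\tilde{\Phi}_1$ through equidistribution of Frobenius conjugacy classes in $\sym_d^{\sharp}$, which is the point the paper is making.

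Two repairs are needed. First, the error term $O(d^{-1}\log d)$ in your asymptotic for $q^{-d}D_d(z)$ is an absolute bound and is destroyed on multiplication by $d^{1-z}$, whose modulus reaches $d^{2}$ near $u=\pi$; you must carry the error relative to the main term, as $q^{-d}D_d(z)=d^{z-1}\bigl(e^{iu}\tilde{\Phi}_1(u)H(1/q,w)+O(d^{-1}\log d)\bigr)+O(d^{-A})$ with $A>2$ coming from the truncation $j\leq C\log d$ (your computation does give this; the display as written does not). Second, for small $q$ the series $\sum_{k\geq 2}(w-w^k)A(T^k)/k$ need not converge on all of $|T|<q^{-1/2}$ when $|w|=|1-e^{iu}|$ is near $2$ (e.g.\ $q=2$, $u=\pi$), so to justify both the Cauchy bound on $h_j(w)$ at some radius $r$ with $1/q<r<q^{-1/2}$ and the evaluation $H(1/q,w)=\tilde{\Phi}_2(u)$, you should use the convergence-factor form $H(T,w)=e^{-wB(T)}\prod_{\irred}(1-wT^{\deg(\irred)})e^{wT^{\deg(\irred)}}$, whose factors are $1+O(|w|^2|T|^{2\deg(\irred)})$ and which is therefore holomorphic on $|T|<q^{-1/2}$ for every $w$. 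Finally, your computation lands on $e^{iu}\tilde{\Phi}_1(u)\tilde{\Phi}_2(u)$, which is what $r(f)=\omega(f)$ and Theorem~\ref{th-main} give; the expression $e^{-iu}\tilde{\Phi}_1(-u)\tilde{\Phi}_2(-u)$ in the statement appears to be a sign slip in the paper rather than a defect of your argument.
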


The only thing to note here is that if $f$ is not squarefree, the
scheme $X_f$ is not reduced; the induced reduced scheme is
$X_{f^{\flat}}$, where $f^{\flat}$ is the (squarefree) product of the
distinct monic irreducible factors dividing $f$. Then
$Z(X_f)=Z(X_{f^{\flat}})$, and we have
$$
-r(f)=\ord_{T=1}Z(X_f)=\ord_{T=1}Z(X_{f^{\flat}})=-r(f^{\flat})=
\omega(f^{\flat})=\omega(f),
$$
so the two theorems are indeed equivalent.
\end{rem}

\begin{rem}
One can also prove by the same method the following two variants,
where we restrict attention to squarefree polynomials, or we consider
irreducible factors with multiplicity. First, we have
$$
\frac{e^{(1-e^{iu})\log d}}{q^d}
\sumb_{\deg(f)=d}
{e^{iu (\omega(f)-1)}}\ra 
\frac{1}{\Gamma(1+e^{iu})}
\prod_{\irred}{
\Bigl(1-\frac{1}{\nirred}\Bigr)^{e^{iu}}
\Bigl(1+\frac{e^{iu}}{\nirred}\Bigr) },
$$
where the sum $\sumb$ runs over all squarefree monic polynomials $f\in
\Fp_q[X]$ with degree $\deg(f)=d$. Next, we have
$$
\frac{e^{(1-e^{iu})\log d}}{q^d}
\sumb_{\deg(f)=d}
{e^{iu (\Omega(f)-1)}}\ra 
\frac{1}{\Gamma(1+e^{iu})}
\prod_{\irred}{
\frac{(1-\nirred^{-1})^{e^{iu}}}
{1-e^{iu}/\nirred}}.
$$
\end{rem}

We now come to the proof. The idea we want to highlight -- the source
of the splitting of the limiting function in two parts of distinct
probabilistic origin -- is to first separate the irreducible factors
of ``small'' degree and those of ``large'' degree (which is fairly
classical), and then observe that an equidistribution theorem allows
us to perform a transfer of the contribution of large factors to the
corresponding average over random permutations, conditioned to not
have small cycle lengths. This will explain the factor
$\tilde{\Phi}_1$ corresponding to the cycle length of random
permutations. Note that shorter arguments are definitely available,
using analogues of the Delange-Selberg method used in~\cite{jkn}
(see~\cite[\S 2, Th. 1]{flajolet-soria}), but this hides again the
mixture of probabilistic models involved.
\par
Interestingly, the small and larger irreducible factors are \emph{not}
exactly independent. But the dependency is (essentially) perfectly
compensated by the effect of the conditioning at the level of random
permutations. Why this is so may be the last little mystery in the
computation, which is otherwise very enlightening.
\par
We set up some notation first: for $f\in \Fp_q[X]$, we let $\degp{f}$
(resp.  $\degm{f}$) denote the largest (resp., smallest) degree of an
irreducible factor $\irred\mid f$; correspondingly, for a permutation
$\sigma\in\sym_d$, we denote by $\ellp{\sigma}$
(resp. $\ellm{\sigma}$) the largest (resp. smallest) length of a cycle
occurring in the decomposition of $\sigma$.
\par
Henceforth, by convention, any sum involving polynomials $f$, $g$,
$h$, etc, is assumed to restrict to monic polynomials, and any sum or
product involving $\irred$ is restricted to monic irreducible
polynomials.
\par
The next lemma summarizes some simple properties, and the important
equidistribution property we need.

\begin{lem}\label{lm-distrib}
With notation as above, we have:
\par
\emph{(1)} For all $d\geq 1$, we have
$$
\frac{1}{q^d}\sum_{\deg(\irred)=d}{1}=\frac{1}{d}+O(q^{-d/2}).
$$
\par
\emph{(2)} For all $d\geq 1$, we have
\begin{gather}\label{eq-upper-mertens}
\prod_{\deg(\irred)\leq d}{\Bigl(1+\frac{1}{\nirred-1}\Bigr)}\ll d,
\\
\label{eq-mertens}
\prod_{\deg(\irred)\leq d}{\Bigl(1-\frac{1}{\nirred}\Bigr)}=
\exp\Bigl(-\sum_{1\leq j\leq d}{\frac{1}{j}}\Bigr)
\erreurm{\frac{1}{d}}.
\end{gather}
\par
\emph{(3)} For any $d\geq 1$ and any fixed permutation
$\sigma\in\sym_d$, we have
\begin{equation}\label{eq-quant-equid}
  \frac{1}{q^d}\sumb_{\stacksum{\deg(f)=d}{F_f^{\sharp}=\sigma^{\sharp}}}{1}
  =\proba(\sigma_d=\sigma)\erreurm{ \frac{d}{q^{\ellm{\sigma}/2}}},
\end{equation}
where the conjugacy class $F_f^{\sharp}\in\sym_d^{\sharp}$ is defined
in the previous section, $\sigma_d$ is a uniformly chosen random
permutation in $\sym_d$ and $\sumb$ restricts the sum to squarefree
polynomials. 
\par
In all estimates, the last under the assumption
$q^{\ellm{\sigma}/2}\geq d$, the implied constants are absolute,
except that in~\emph{(\ref{eq-upper-mertens})}, the implied constant
may depend on $q$.
\end{lem}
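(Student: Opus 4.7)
The plan is to treat the three parts in sequence, all relying on the Gauss--Dedekind formula $\Pi_q(d)=\frac{1}{d}\sum_{\delta\mid d}\mu(\delta)q^{d/\delta}$ recalled in the preceding section. Part (1) is immediate from this: the $\delta=1$ term contributes $q^d/d$, while the remaining terms satisfy $\frac{1}{d}\bigl|\sum_{\delta\mid d,\,\delta\geq 2}\mu(\delta)q^{d/\delta}\bigr|\ll q^{d/2}$, so $\Pi_q(d)/q^d=1/d+O(q^{-d/2})$.

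For Part (2), the key is the exact Euler-product identity $\prod_\irred(1-u^{\deg(\irred)})^{-1}=(1-qu)^{-1}$, valid analytically for $|u|<1/q$. Setting $u=t/q$ with $t\in[0,1)$ and taking logarithms gives $\sum_\irred\log(1-t^{\deg(\irred)}/|\irred|)^{-1}=-\log(1-t)$. Denoting by $L_d(t)$ the partial sum over $\deg(\irred)\leq d$, this yields
$$L_d(t)=-\log(1-t)-\sum_{k>d}\Pi_q(k)\sum_{m\geq 1}\frac{t^{mk}}{m q^{mk}}.$$
Expanding the tail and substituting $\Pi_q(k)/q^k=1/k+O(q^{-k/2})$ from Part (1), the tail simplifies, up to a geometrically small error $O(q^{-d/2})$ uniform in $t$, to $-\log(1-t)-\sum_{k\leq d}t^k/k$; therefore $L_d(t)=\sum_{k\leq d}t^k/k+O(q^{-d/2})$. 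Letting $t\to 1^-$ by monotone convergence gives $-\log\prod_{\deg(\irred)\leq d}(1-|\irred|^{-1})=H_d+O(q^{-d/2})\subset H_d+O(1/d)$ (where $H_d=\sum_{k\leq d}1/k$), which exponentiates to (\ref{eq-mertens}). The bound (\ref{eq-upper-mertens}) follows at once since $1+1/(|\irred|-1)=(1-1/|\irred|)^{-1}$ and $\exp(H_d)\ll d$.

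For Part (3), I would argue combinatorially. If $\sigma\in\sym_d$ has cycle-type $(r_1,\ldots,r_d)$ with $\sum_j jr_j=d$, then a squarefree monic $f$ of degree $d$ with $F_f^\sharp=\sigma^\sharp$ corresponds to an unordered choice of $r_j$ distinct monic irreducibles of each degree $j$; so the count equals $\prod_j\binom{\Pi_q(j)}{r_j}$. Expanding
$$\binom{\Pi_q(j)}{r_j}=\frac{\Pi_q(j)^{r_j}}{r_j!}\erreurm{r_j^2/\Pi_q(j)}$$
and inserting $\Pi_q(j)/q^j=1/j+O(q^{-j/2})$, we multiply out after dividing by $q^d=\prod_j q^{jr_j}$. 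The leading contribution is $\prod_j(j^{r_j}r_j!)^{-1}$, which by Cauchy's formula for conjugacy-class sizes equals $\proba(\sigma_d=\sigma)$ (read as the probability of cycle-type $\sigma^\sharp$). The multiplicative error factor is $\prod_j\erreurm{r_j q^{-j/2}}$, and since $r_j=0$ for $j<\ellm{\sigma}$ and $\sum_j r_j=\varpi(\sigma)\leq d$, this accumulates to $\erreurm{d/q^{\ellm{\sigma}/2}}$, exactly as stated; the hypothesis $q^{\ellm{\sigma}/2}\geq d$ is what keeps each binomial coefficient in its asymptotic regime.

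The main obstacle is the precise error tracking in Part (2): a naive direct computation leaves an $O(1)$ additive constant in $-\log\prod(1-|\irred|^{-1})$, and showing this constant is \emph{exactly zero} (which is the content of (\ref{eq-mertens})) forces one to invoke the zeta-function identity to perform a global cancellation. In Part (3), the corresponding delicate point is packaging the binomial approximation, the Gauss--Dedekind error, and the cycle-type bookkeeping into a single uniform error of the claimed shape $1+O(d/q^{\ellm{\sigma}/2})$.
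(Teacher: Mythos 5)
Your parts (1) and (3) follow essentially the same route as the paper: (1) is the Gauss--Dedekind formula, and (3) is the same combinatorial count $\prod_j\binom{\Pi_q(j)}{r_j}$ expanded via $\Pi_q(j)=q^j/j+O(q^{j/2})$, with the hypothesis $q^{\ellm{\sigma}/2}\geq d$ controlling the accumulation of errors. One small bookkeeping point there: the factor coming from $(\Pi_q(j)/q^j)^{r_j}=j^{-r_j}(1+O(jq^{-j/2}))^{r_j}$ contributes a relative error $O(r_j\,j\,q^{-j/2})$ per degree $j$, not $O(r_jq^{-j/2})$ as you wrote; the conclusion is unaffected because the correct budget is $\sum_j jr_j=d$ rather than $\sum_j r_j\leq d$, but you should sum against $jr_j$ when you accumulate.

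Part (2) is where you genuinely diverge, and your route is arguably better. The paper obtains~(\ref{eq-mertens}) by citing Rosen's function-field Mertens theorem $\prod_{\deg(\irred)\leq d}(1-\nirred^{-1})=e^{-\gamma}d^{-1}(1+O(1/d))$ and combining it with $\sum_{j\leq d}1/j=\log d+\gamma+O(1/d)$; it then derives~(\ref{eq-upper-mertens}) separately by a direct estimate (with a $q$-dependent constant). You instead prove~(\ref{eq-mertens}) self-containedly from the exact identity $\prod_{\irred}(1-u^{\deg\irred})^{-1}=(1-qu)^{-1}$, comparing the partial logarithmic sum $L_d(t)$ with $\sum_{k\leq d}t^k/k$ uniformly in $t$ and letting $t\to 1^-$ (no monotone convergence is needed -- $L_d$ is a finite sum of functions continuous at $t=1$). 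This yields the sharper error $O(q^{-d/2})$, i.e., exactly the refinement~(\ref{eq-mertens2}) of Remark~\ref{rm-mertens}, and in particular it establishes directly that the constant $\gamma_q$ of~(\ref{eq-gamma-const}) vanishes -- a fact the paper only recovers a posteriori from Theorem~\ref{th-main}. Your derivation of~(\ref{eq-upper-mertens}) as the exact reciprocal of the product in~(\ref{eq-mertens}) is also cleaner and gives an absolute implied constant. The trade-off is that the paper's citation of Rosen keeps the lemma short, while your argument makes the lemma independent of an external reference and explains ``analytically'' why the Mertens constant is the same as over $\ZZ$.
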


\begin{proof}
  The first statement has already been
  recalled. For~(\ref{eq-upper-mertens}), we have
\begin{align*}
\prod_{\deg(\irred)\leq d}{\Bigl(1+\frac{1}{\nirred-1}\Bigr)} 
&\leq
\exp\Bigl(\sum_{1\leq j\leq d} {\frac{\Pi_q(j)}{q^j-1}}\Bigr)\\
&=\exp\Bigl(\sum_{2\leq j\leq d}{\frac{1}{j}}
+O\Bigl(\sum_{1\leq j\leq d}{\frac{q^{j/2}}{q^j-1}}\Bigr)\Bigr)
\ll d,
\end{align*}
for $d\geq 1$, with an implied constant depending on $q$.
\par
For~(\ref{eq-mertens}), which is the analogue for $\Fp_q[T]$ of the
classical Mertens estimate, we refer, e.g., to~\cite{rosen}, where it
is proved in the form
$$
\prod_{\deg(\irred)\leq d}{\Bigl(1-\frac{1}{\nirred}\Bigr)}=
\frac{e^{-\gamma}}{d}\Bigl(1+O\Bigl(\frac{1}{d}\Bigr)\Bigr)
$$
for $d\geq 1$, $\gamma$ being the Euler constant; since 
$$
\sum_{1\leq j\leq d}{\frac{1}{j}}=\log d+\gamma+
O\Bigl(\frac{1}{d}\Bigr),
$$
we get the stated result. We emphasize the fact that the asymptotic of
the product in~(\ref{eq-mertens}) is independent of $q$ (and is the
same as for the usual Mertens formula for prime numbers), since this
may seem surprising at first sight. This is explained by the relation
with random permutations, and in fact, in Remark~\ref{rm-mertens}
below, we explain how our argument leads to a much sharper
estimate~(\ref{eq-mertens2}) for the error term in~(\ref{eq-mertens}).
\par
Finally, for the third statement, if $\sigma$ is a product of $r_j$
disjoint $j$-cycles for $1\leq j\leq d$, we first recall the standard
formula that
\begin{equation}\label{eq-card-conj}
\proba(\sigma_d=\sigma)=\prod_{1\leq j\leq d}{\frac{1}{j^{r_j}r_j!}},
\end{equation}
and we observe that the product can be made to range over
$\ellm{\sigma}\leq j\leq d$, since the terms $j<\ellm{\sigma}$ have
$r_j=0$ by definition. Using this observation, we have by simple
counting
$$
\sumb_{\stacksum{\deg(f)=d}{F_f^{\sharp}=\sigma^{\sharp}}}{1}
=\prod_{\ellm{\sigma}\leq j\leq d}{ \binom{\Pi_q(j)}{r_j}}.
$$
\par
Furthermore, for any $r$ and $j\geq 1$ such that $r<q^{j/2}$ and
$j\leq r$, we have
\begin{align*}
\binom{\Pi_q(j)}{r}&=\frac{1}{r!}\Pi_q(j)(\Pi_q(j)-1)\cdots (\Pi_q(j)-r+1)
\\
&=\frac{1}{r!}\Bigl(\frac{q^j}{j}+O(q^{-j/2})\Bigr)^r
=\frac{q^{jr}}{j^{r}r!}(1+O(rq^{-j/2}))^{r},
\end{align*}
by the first part of the lemma. Combining the two formulas, we get
\begin{align*}
\frac{1}{q^d}\sumb_{\stacksum{\deg(f)=d}{F_f^{\sharp}=\sigma^{\sharp}}}{1}
&=q^{-d}
\prod_{\ellm{\sigma}\leq j\leq d}
\frac{q^{jr_j}}{j^{r_j}r_j!}(1+O(dq^{-j/2}))^{r_j}\\
&=
\prod_{\ellm{\sigma}\leq j\leq  d}{\frac{1}{r_j!j^{r_j}}
\Bigl(1+O(dq^{-j/2})\Bigr)^{r_j}}\\
&=\proba(\sigma_d=\sigma)\prod_{\ellm{\sigma}\leq j\leq  d}
{\Bigl(1+O(dq^{-j/2})\Bigr)^{r_j}}
\end{align*}
and this immediately gives the conclusion since the implied constant
in the formula for $\Pi_q(j)$ is at most $1$. 
\end{proof}

Part (3) of this lemma means that, as long as we consider permutations
$\sigma\in\sym_d$ with no short cycle, so that
$$
d=o(q^{\ellm{\sigma}/2}),
$$
there is strong quantitative equidistribution of the conjugacy class
$F_f^{\sharp}$ among all conjugacy classes in $\sym_d$.
\par
Thus, to compare the distribution of polynomials and that of
permutations, it is natural to introduce a parameter $b$, $0\leq b\leq
d$, to be specified later, and to first write any monic polynomial $f$
of degree $d$ as $f=gh$, where the monic polynomials $g$ and $h$ are
uniquely determined by
\begin{equation}\label{eq-split-poly}
\degp{g}\leq b,\quad\quad \degm{h}>b
\end{equation}
(i.e., $g$ contains the small factors, and $h$ the large ones; they
correspond to ``friable'' and ``sifted'' integers in classical
analytic number theory). One can expect, by the above, that if $b$ is
such that $q^{b/2}$ is large enough compared with $d$, the
distribution of $h$ will reflect that of permutations without cycles
of length $\leq b$. And the contribution of small factors should (and
will) be comparable with the independent model for divisibility of
polynomials by irreducible ones.
\par
We now start the proof of Theorem~\ref{th-main} along these
lines, trying to evaluate 
$$
\frac{1}{q^d}
\sum_{\deg(f)=d}
{e^{iu \omega(f)}}
$$
\par
Writing $f=gh$, where $g$ and $h$ satisfy~(\ref{eq-split-poly}) as
above, we have $\omega(f)=\omega(g)+\omega(h)$ since $g$ and $h$ are
coprime, and hence
$$
\frac{1}{q^d}
\sum_{\deg(f)=d}
{e^{iu \omega(f)}}=
\sum_{\stacksum{\deg(g)\leq d}{\degp{g}\leq b}}
{\frac{e^{iu\omega(g)}}{|g|}
T(d-\deg(g),b)},
$$
where we define
$$
T(d,b)=\frac{1}{q^d}\sum_{\stacksum{\deg(f)=d}{\degm{f}>b}}{e^{iu\omega(f)}}.
$$
\par
Denote further
$$
R(d,b)=\sum_{\stacksum{\deg(g)>d}{\degp{g}\leq b}}
{\frac{1}{|g|}},\quad\quad
S(d,b)=\sumb_{\stacksum{\deg(g)\leq d}{\degp{g}\leq b}}
{\frac{1}{|g|}}.
$$
\par
Noting that $|T(d,b)|\leq 1$ for all $b$ and $d$, and splitting the
sum over $g$ according as to whether $\deg(g)\leq \sqrt{d}$ or
$\deg(g)>\sqrt{d}$, we get
\begin{align}
\frac{1}{q^d}
\sum_{\deg(f)=d}
{e^{iu \omega(f)}}&=
\sum_{\stacksum{\deg(g)\leq \sqrt{d}}{\degp{g}\leq b}}
{\frac{e^{iu\omega(g)}}{|g|}
T(d-\deg(g),b)}+O(R(\sqrt{d},b))\nonumber\\
&=S_1+O(R(\sqrt{d},b)),\text{ say}.\label{eq-first}
\end{align}
\par
The next step, which is were random permutations will come into play,
will be to evaluate $T(d,b)$ asymptotically in suitable ranges.

\begin{prop}\label{prop-td}
With notation as before, we have
\begin{multline}
T(d,b)=\exp\Bigl(-e^{iu}\sum_{j=1}^b{\frac{1}{j}}\Bigr) \expect(e^{iu
  \varpi(\sigma_d)})+ \\
O\Bigl(|\expect(e^{iu
  \varpi(\sigma_d)})|b^2d^{-1}+dq^{-b/2} +b^3(\log d)^{1/2}d^{-2}\Bigr),
\end{multline}
with an absolute implied constant, in the range
\begin{equation}\label{eq-valid}
  q^{b/2}\geq d,\quad b\leq d.
\end{equation}
\end{prop}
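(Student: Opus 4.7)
The plan is to split $T(d,b)$ into contributions from squarefree and non-squarefree polynomials, transfer the squarefree part to a sum over random permutations via Lemma~\ref{lm-distrib}(3), and then use an exponential generating function identity to pass from the conditional expectation $\E[e^{iu\varpi(\sigma_d)}\charfun_{\ellm{\sigma_d}>b}]$ to the unconditional $\E(e^{iu\varpi(\sigma_d)})$, isolating the factor $\exp(-e^{iu}H_b)$ with $H_b=\sum_{j=1}^b 1/j$.

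A non-squarefree monic $f$ of degree $d$ with $\degm{f}>b$ has the form $f=gh^2$ with $h$ monic of degree $\geq b+1$, so a crude count gives at most $O(q^{d-b})$ such polynomials and their contribution to $T(d,b)$ is $O(q^{-b})$, absorbed into $dq^{-b/2}$. For the squarefree part, using $\omega(f)=\varpi(F_f^{\sharp})$ and $\ellm{F_f^{\sharp}}=\degm{f}$, I group by conjugacy class in $\sym_d^{\sharp}$. Only classes with $\ellm{C}>b$ contribute, so in the valid range $q^{b/2}\geq d$, Lemma~\ref{lm-distrib}(3) applied class by class gives
$$
q^{-d}\sumb_{\stacksum{\deg f=d}{\degm f>b}} e^{iu\omega(f)} = \E\bigl[e^{iu\varpi(\sigma_d)}\charfun_{\ellm{\sigma_d}>b}\bigr] + O(dq^{-b/2}),
$$
since the multiplicative error $(1+O(dq^{-b/2}))$ integrates against a total mass at most~$1$.

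By the exponential formula for cycle indices of the symmetric group,
$$
\sum_{d\geq 0}\E\bigl[e^{iu\varpi(\sigma_d)}\charfun_{\ellm{\sigma_d}>b}\bigr]\,x^d = \exp\Bigl(e^{iu}\sum_{j>b}\frac{x^j}{j}\Bigr) = (1-x)^{-e^{iu}}\,g(x),
$$
where $g(x)=\exp(-e^{iu}\sum_{j=1}^{b}x^j/j)$ and $g(1)=\exp(-e^{iu}H_b)$. Writing $a_m=[x^m](1-x)^{-e^{iu}}=\E(e^{iu\varpi(\sigma_m)})$ and $c_n=[x^n]g(x)$, coefficient extraction and a rearrangement give
$$
\E\bigl[e^{iu\varpi(\sigma_d)}\charfun_{\ellm{\sigma_d}>b}\bigr] = \sum_{n=0}^{d} c_n a_{d-n} = g(1)a_d + \sum_{n=0}^{d} c_n(a_{d-n}-a_d) - a_d\sum_{n>d}c_n,
$$
whose principal term is exactly as required.

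The hard part will be estimating the two remainder sums to match the stated error. The ingredients I plan to use are the explicit ratio $a_{m+1}/a_m=1+(e^{iu}-1)/(m+1)$, which Taylor-expands to $a_{d-n}/a_d=1+O(n/d)+O((n/d)^2 + 1/d)$ uniformly for $n\leq d/2$, together with contour integration on $|x|=1+1/b$ (where $|g|\leq\exp(eH_b)\ll b^{\,e}$) to obtain $|c_n|\ll b^{\,e}(1+1/b)^{-n}$, giving moment bounds $\sum_n n^k|c_n|\ll b^{\,k+O(1)}$ and exponential tail decay $\sum_{n>d}|c_n|=O(e^{-d/(2b)})$. The first-order expansion then produces an error $O(|\E(e^{iu\varpi(\sigma_d)})|\,b/d)$, which is absorbed into the stated $|E(d)|b^2/d$ bound, while the quadratic remainder combined with a Cauchy--Schwarz-type treatment of $\sum_n n^2|c_n|$ accounts for the sub-leading $b^3(\log d)^{1/2}d^{-2}$ term.
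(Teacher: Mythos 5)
Your first two reductions --- removing the non-squarefree polynomials at a cost of $O(q^{-b})$, and converting the squarefree sum into $\expect(e^{iu\varpi(\sigma_d)}\charfun_{\ellm{\sigma_d}>b})+O(dq^{-b/2})$ via Lemma~\ref{lm-distrib}(3) --- are exactly the paper's. The genuine divergence is in how you evaluate the conditioned expectation: the paper isolates this as Proposition~\ref{pr-permut} and proves it by inclusion--exclusion over $b$-cycles, run as an induction on $b$ with two error sequences to track. Your route via the exponential formula, $\sum_d\expect(e^{iu\varpi(\sigma_d)}\charfun_{\ellm{\sigma_d}>b})x^d=(1-x)^{-e^{iu}}g(x)$ with $g(x)=\exp(-e^{iu}\sum_{j\leq b}x^j/j)$, followed by the convolution rearrangement around $g(1)a_d$, is correct and is essentially the Flajolet--Soria generating-function approach that the paper mentions as a shorter alternative. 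It delivers the main term $\exp(-e^{iu}H_b)\expect(e^{iu\varpi(\sigma_d)})$ (with $H_b=\sum_{j\leq b}1/j$) immediately, and, carried out properly, gives an error proportional to $|\expect(e^{iu\varpi(\sigma_d)})|$ plus an exponentially small tail --- in fact sharper than the paper's $b^3(\log d)^{1/2}d^{-2}$ term, which is an artifact of the induction. You should not try to manufacture that term; a smaller error is acceptable.

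The error analysis as you sketch it does not close, however. The contour bound $|c_n|\ll b^{e}(1+1/b)^{-n}$ carries the loss $\max_{|x|=1+1/b}|g(x)|\ll b^{e}$, so it gives only $\sum_n n|c_n|\ll b^{e+2}$ and hence a first-order error $|a_d|\,b^{e+2}d^{-1}$, which overshoots the required $|a_d|\,b^2d^{-1}$. The repair is to use positivity rather than a contour: expanding the exponential termwise, $|c_n|\leq [x^n]\exp\bigl(\sum_{j\leq b}x^j/j\bigr)=\proba(\ellp{\sigma_n}\leq b)$, whence $\sum_n|c_n|\leq e^{H_b}\ll b$ and $\sum_n n|c_n|\leq b\,e^{H_b}\ll b^2$, which is exactly what is needed. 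Two further points. First, $a_{d-n}/a_d=1+O(n/d)$ holds only for $n\leq d/2$ (and fails outright at $u=\pi$, where $a_d=0$ for $d\geq 2$ while $a_1\neq0$); the ranges $d/2<n\leq d$ and $n>d$ must both go into the exponential tail bound $\ll b^{O(1)}e^{-cd/b}$, and that bound beats the allowed error only when $b\ll d/\log d$. Since the proposition is asserted for all $b\leq d$, you must dispose of larger $b$ separately (e.g.\ by checking when the trivial bounds $|T(d,b)|\leq 1$ and $|\exp(-e^{iu}H_b)a_d|\ll b$ are already absorbed by the error term); in the eventual application only $b=(\log d)^2$ is used, but the statement as written requires the full range.
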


\begin{proof}
Before introducting permutations, we separate the contribution of
squarefree and non-squarefree polynomials in $T(d,b)$ (the intuition
being that non-squarefree ones should be much sparser than for all
polynomials because of the imposed divisibility only by large
factors):
$$
T(d,b)=T^{\flat}(d,b)+T^{\sharp}(d,b)
$$
where
$$
T^{\flat}(d,b)=\frac{1}{q^d}
\sumb_{\stacksum{\deg(f)=d}{\degm{f}>b}}{e^{iu\omega(f)}},
$$
and $T^{\sharp}(d,b)$ is the complementary term. We then estimate the
latter by
\begin{align*}
|T^{\sharp}(d,b)|& 
\leq \sum_{b\leq \deg(g)\leq d/2}{
\frac{1}{q^d}
\sum_{\stacksum{\deg(f)=d}{g^2\mid f}}{1}}
\\
&=
\sum_{b\leq \deg(g)\leq d/2}{
\frac{1}{q^d}
\sum_{\deg(f)=d-2\deg(g)}{1}
}\\
&\leq\sum_{\deg(g)\geq b}{\frac{1}{q^{2\deg(g)}}}
\ll \frac{1}{q^b}.
\end{align*}
\par
We can now introduce permutations through the association $f\mapsto
F_f^{\sharp}$ sending a squarefree polynomial to its associated cycle
type. Using~(\ref{eq-ell-om}), we obtain
$$
T^{\flat}(d,b)=\sum_{\stacksum{\sigma\in\sym_d}{\ellm{\sigma}>b}}{
  e^{iu\varpi(\sigma)}
  \frac{1}{q^d}\sumb_{\stacksum{\deg(f)=d}{F_f^{\sharp}=\sigma^{\sharp}}}{1}
},
$$
which is now a sum over permutations without small cycles. Using the
third statement of Lemma~\ref{lm-distrib}, we derive
\begin{align*}
  T^{\flat}(d,b)&=\sum_{\stacksum{\sigma\in\sym_d}
    {\ellm{\sigma}>b}}{ e^{iu\varpi(\sigma)}\proba(\sigma_d=\sigma)
    \erreurm{\frac{d}{q^{b/2}}}}\\
  &=\expect(e^{iu\varpi(\sigma_d)}\charfun_{\ellm{\sigma_d}>b})
  +O\Bigl(\proba(\ellm{\sigma_d}>b)dq^{-b/2}\Bigr),
\end{align*}
with an absolute implied constant if $q^{b/2}\geq d$.
\par
Thus the problem is reduced to one about random permutations. Using
Proposition~\ref{pr-permut} below with $\eps=1$, the proof is
finished.
\end{proof}

Now recall that the characteristic function
$\expect(e^{iu\varpi(\sigma_d)})$ is explicitly known
from~(\ref{eq-char-permut}). This formula,
or~(\ref{eq-explicit-permut}), implies in particular that we have
\begin{equation}\label{eq-unif-char}
\expect(e^{iu\varpi(\sigma_{d-j})})
=\expect(e^{iu \varpi(\sigma_d)})\erreurm{\frac{j}{d}}.
\end{equation}
\par
Then, inserting the formula of Proposition~\ref{prop-td} in the first
term $S_1$ of~(\ref{eq-first}), and using this formula, we obtain in
the range of validity~(\ref{eq-valid}) that
$$
S_1=\exp\Bigl(-e^{iu}
\sum_{j=1}^b{\frac{1}{j}}\Bigr)\expect(e^{iu \varpi(\sigma_{d})})
   \sum_{\stacksum{\deg(g)\leq \sqrt{d}}{\degp{g}\leq b}}
   {\frac{e^{iu\omega(g)}}{|g|}}+R
$$
where, after some computations, we find that
$$
R\ll (|\expect(e^{iu\varpi(\sigma_d)})|b^2d^{-1}
+dq^{-b/2}+b^3(\log d)^{1/2}d^{-2})S(\sqrt{d},b),
$$
with an absolute implied constant.
\par
Extending the sum in the main term, we get
$$
S_1=M+R_1,
$$
where
\begin{gather*}
M=
\expect(e^{iu \varpi(\sigma_{d})})
\exp\Bigl(-e^{iu}\sum_{j=1}^b{\frac{1}{j}}\Bigr)
\sum_{\degp{g}\leq b}{\frac{e^{iu\omega(g)}}{|g|}},
\\
R_1\ll bR(\sqrt{d},b)+
\Bigl(|\expect(e^{iu\varpi(\sigma_d)})|\frac{b^2}{d}
+\frac{d}{q^{b/2}}+
\frac{b^3(\log d)^{1/2}}{d^{2}}\Bigr)S(\sqrt{d},b).
\end{gather*}
\par
Now, we can finally apply~(\ref{eq-mertens}) and multiplicativity in
the sum over $g$ in $M$, to see that
\begin{align*}
M=\expect(e^{iu \varpi(\sigma_{d})})\prod_{\deg(\irred)\leq b}{
\Bigl(1-\frac{1}{\nirred}\Bigr)^{e^{iu}}
\Bigl(1+\frac{e^{iu}}{\nirred-1}\Bigr)
}
\Bigl(1+O\Bigl(\frac{1}{b}\Bigr)\Bigr)
\end{align*}
and hence, by the mod-Poisson convergence of $\varpi(\sigma_d)$ and
the absolute convergence of the Euler product extended to infinity, we
have
$$
\lim_{d,b\ra +\infty}\exp((\log d)(1-e^{iu}))M
=\tilde{\Phi}_1(u)\tilde{\Phi}_2(u),
$$
uniformly for $u\in \Rr$.
\par
There remain to consider the error terms to conclude the proof of
Theorem~\ref{th-main}. We select $b=(\log d)^2\ra +\infty$;
then~(\ref{eq-valid}) holds for all $d\geq d_0(q)$, and hence the
previous estimates are valid and we must now show that
$$
\exp((\log d)(1-e^{iu}))R(\sqrt{d},b)\ra 0,
\quad
\exp((\log d)(1-e^{iu}))R_1\ra 0
$$
(the first desideratum coming from~(\ref{eq-first})).
\par
Note that $|\exp((\log d)(1-e^{iu}))|\leq d^2$. Now we claim that
\begin{align}\label{eq-rankin}
R(d,b)&\ll b^Ce^{-d/b}\\
S(d,b)& \ll b,\label{eq-sdb}
\end{align}
for $1\leq b\leq d$ and absolute constant $C>0$, with absolute
implied constants for the first, and an implied constant depending
only on $q$ for the second.
\par
Granting this, we have 
$$
d^2R(\sqrt{d},(\log d)^2)\ll \exp\Bigl(2\log d+2C\log\log d-
\frac{\sqrt{d}}{(\log  d)^2}\Bigr)\fleche{} 0,
$$
and all terms in $R_1$ are similarly trivially estimated, except for
$$
\exp((\log d)(1-e^{iu}))|\expect(e^{iu\varpi(\sigma_d)})|b^2d^{-1}
S(\sqrt{d},b)\ll
b^3d^{-1}\fleche{}0,
$$
using again the mod-Poisson convergence of $\varpi(\sigma_d)$.
\par
We now justify~(\ref{eq-sdb}) and~(\ref{eq-rankin}): for the former,
by~(\ref{eq-upper-mertens}), we have
$$
|S(\sqrt{d},b)|\leq \prod_{\deg(\irred)\leq b}{
\Bigl(1+\frac{1}{\nirred-1}\Bigr)
}\ll b,
$$
and for the latter, we need only a simple application of the
well-known Rankin trick: for any $\sigma\geq 0$, $d\geq 1$ and $g\in
\Fp_q[X]$, we have
$$
\charfun_{\deg(g)>d}\leq q^{\sigma(\deg(g)-d)},
$$
and hence, by multiplicativity, we get
$$
R(d,b)\leq q^{-\sigma d}\sum_{\degp{g}\leq b}{q^{(\sigma-1)\deg(g)}}
=q^{-\sigma d}\prod_{\deg(\irred)\leq b}{(1-\nirred^{\sigma-1})^{-1}},
$$
which we estimate further for $\sigma$ using
\begin{align*}
\prod_{\deg(\irred)\leq b}{(1-\nirred^{\sigma-1})^{-1}}
&=\exp\Bigl(\sum_{\deg(\irred)\leq b}{\sum_{k\geq 1}
{\frac{\nirred^{k(\sigma-1)}}{k}}}\Bigr)
\\
&\leq 
\exp\Bigl(C\sum_{j=1}^b{\frac{q^{j\sigma}}{j}}\Bigr)\leq
\exp(C'q^{\sigma b}\log b)
\end{align*}
for some absolute constants $C, C'>0$. Taking $\sigma=1/(b\log q)$
leads immediately to~(\ref{eq-rankin}).
\par
Finally, here is the computation of the characteristic function of the
cycle count of permutations without small parts that we used in the
proof of Proposition~\ref{prop-td}.

\begin{prop}\label{pr-permut}
For all   $d\geq 2$ and $b\geq 0$ such that $b\leq d$, we have
\begin{multline}
  \expect(e^{iu\varpi(\sigma_d)}\charfun_{\ellm{\sigma_d}>b})
  =\exp\Bigl(-e^{iu}\sum_{j=1}^b{\frac{1}{j}}\Bigr)
  \expect(e^{iu \varpi(\sigma_d)})\\
  +O(|\expect(e^{iu \varpi(\sigma_d)})|b^{1+\eps}d^{-1}+b^3(\log
  d)^{1/2} d^{-2}),\label{eq-goal}
\end{multline}
for any $\eps>0$, where the implied constant depend only on $\eps$.
\end{prop}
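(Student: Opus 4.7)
The plan is to derive an exact series expansion for $\expect[e^{iu\varpi(\sigma_d)}\charfun_{\ellm{\sigma_d}>b}]$ by combining an elementary inversion with the classical factorial moment formula for random permutations, and then estimate the error. Writing $r_j := r_j(\sigma_d)$, the identity $\charfun_{r=0} = \sum_{k \geq 0}(-1)^k \binom{r}{k}$ (which follows from $(1-1)^r = \charfun_{r=0}$) expands $\charfun_{\ellm{\sigma_d}>b} = \prod_{j=1}^{b}\charfun_{r_j=0}$ as a sum over $\mathbf{k} \in \NN^b$. Combined with the weighted factorial moment identity
$$
\expect\Bigl[e^{iu\varpi(\sigma_d)}\prod_{j=1}^{b}\binom{r_j}{k_j}\Bigr] \;=\; \frac{e^{iu(k_1+\cdots+k_b)}}{\prod_j j^{k_j}k_j!}\cdot\expect[e^{iu\varpi(\sigma_{d-m})}] \qquad (m:=\textstyle\sum_j jk_j\leq d),
$$
(zero if $m > d$), proved by conditioning on a choice of $k_j$ distinguished $j$-cycles for each $j \leq b$ (contributing $e^{iu\sum k_j}$ to $e^{iu\varpi}$ and leaving a uniformly random permutation of the remaining $d-m$ letters), this yields the exact formula
$$
\expect[e^{iu\varpi(\sigma_d)}\charfun_{\ellm{\sigma_d}>b}] \;=\; \sum_{\mathbf{k}:\,m\leq d} T_{\mathbf{k}}(u)\,\expect[e^{iu\varpi(\sigma_{d-m})}], \quad T_{\mathbf{k}}(u) := \prod_{j=1}^{b}\frac{(-e^{iu}/j)^{k_j}}{k_j!}.
$$
The desired main term is generated by the observation $\sum_{\mathbf{k}\in\NN^b}T_{\mathbf{k}}(u) = \prod_j e^{-e^{iu}/j} = e^{-e^{iu} H_b}$, with $H_b := \sum_{j=1}^{b}1/j$.

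Subtracting, the error splits as $A+B$, with $A := \sum_{\mathbf{k}:\,m\leq d}T_{\mathbf{k}}(u)(\expect[e^{iu\varpi(\sigma_{d-m})}]-\expect[e^{iu\varpi(\sigma_d)}])$ and $B := -\expect[e^{iu\varpi(\sigma_d)}]\sum_{\mathbf{k}:\,m>d}T_{\mathbf{k}}(u)$. For $A$, I would apply~\eqref{eq-unif-char} in the regime $m\leq d/2$ (yielding $O(m/d)\expect[e^{iu\varpi(\sigma_d)}]$ per term) and use the trivial bound $|\expect[\cdots]|\leq 1$ for $m>d/2$. The weights $|T_{\mathbf{k}}(u)| = \prod_j(1/j)^{k_j}/k_j!$ equal $e^{H_b}$ times the probability mass for independent Poisson variables $K_j$ with parameter $1/j$, so that $M := \sum_j jK_j$ satisfies $\expect M=b$ and $\mathrm{Var}\,M\leq b^2$; the naive absolute-value estimates then give $\sum_{\mathbf{k}}|T_{\mathbf{k}}|\cdot m \ll be^{H_b}\asymp b^2$ for $A$, and, via Chebyshev, $\sum_{\mathbf{k}:\,m>d}|T_{\mathbf{k}}|\ll e^{H_b}(b/d)^2 \ll b^3/d^2$ for $B$.

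The main obstacle is sharpening the $b^2/d$ factor in $A$ to the claimed $b^{1+\eps}/d$, since the absolute bounds lose a full power of $b$ through $e^{H_b}\asymp b$. This would use the explicit evaluation $\sum_{\mathbf{k}}T_{\mathbf{k}}(u)\cdot m = -e^{iu}b\,e^{-e^{iu} H_b}$, whose modulus $b\,e^{-\cos u\cdot H_b}$ is only $O(b^{1+\eps})$ when $\cos u \geq -\eps$, yielding the stated error after handling the truncation $m>d/2$ via Chebyshev. In the complementary range $\cos u < -\eps$, the main term $|\expect[e^{iu\varpi(\sigma_d)}]|=d^{\cos u-1}/|\Gamma(e^{iu})|$ is so small that the error is absorbed into the additive $b^3(\log d)^{1/2}/d^2$ piece, the vanishing of $1/\Gamma(e^{iu})$ at $u=\pi$ being essential to keep the critical regime under control.
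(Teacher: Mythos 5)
Your approach is correct, and it is genuinely different from the paper's. The paper proceeds by induction on $b$: it removes only the $b$-cycles at one step, derives the recursion $\Phi_{d,b}(u)=\sum_{k\leq d/b}\frac{(-e^{iu}/b)^k}{k!}\Phi_{d-kb,b-1}(u)$, and then propagates two error functionals $E_{d,b}$, $F_{d,b}$ through the induction, which is where the bookkeeping (and the artifact $(\log d)^{1/2}$) comes from. You instead perform the inclusion--exclusion over all cycle lengths $j\leq b$ simultaneously, and your weighted factorial-moment identity is correct (the standard counting argument with $k_j$ distinguished $j$-cycles goes through verbatim with the weight $e^{iu\varpi}$, each distinguished cycle contributing $e^{iu}$), so you get the exact closed formula
$$
\expect\bigl(e^{iu\varpi(\sigma_d)}\charfun_{\ellm{\sigma_d}>b}\bigr)
=\sum_{\mathbf{k}\,:\,m\leq d}\Bigl(\prod_{j=1}^{b}\frac{(-e^{iu}/j)^{k_j}}{k_j!}\Bigr)\,
\expect\bigl(e^{iu\varpi(\sigma_{d-m})}\bigr),\qquad m=\sum_j jk_j,
$$
which does all the error analysis in one place; this is arguably cleaner than the paper's induction and would even remove the spurious $(\log d)^{1/2}$. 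Two points need attention to close your sketch. First, and most importantly, the cancellation step that upgrades $b^2/d$ to $b^{1+\eps}/d$ cannot be run with~(\ref{eq-unif-char}) alone, since that only gives the unsigned bound $O(m/d)$ per term; you must use the exact product~(\ref{eq-char-permut}) to write, for $m\leq d/2$, $\expect(e^{iu\varpi(\sigma_{d-m})})-\expect(e^{iu\varpi(\sigma_d)})=-(e^{iu}-1)\tfrac{m}{d}\expect(e^{iu\varpi(\sigma_d)})+O(|\expect(e^{iu\varpi(\sigma_d)})|m^2/d^2)$, and only then invoke your evaluation $\sum_{\mathbf{k}}T_{\mathbf{k}}(u)m=-be^{iu}e^{-e^{iu}H_b}$ (the $m^2$ remainder and the truncation to $m\leq d/2$ are handled by your Poisson/Chebyshev bounds and land in $b^3/d^2$). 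Second, your final remark is slightly off: the vanishing of $1/\Gamma(e^{iu})$ at $u=\pi$ is not needed, only its boundedness on the unit circle, since $|\expect(e^{iu\varpi(\sigma_d)})|\ll d^{\cos u-1}$ gives $|\expect(e^{iu\varpi(\sigma_d)})|\,b^{1-\cos u}d^{-1}\ll bd^{-2}(d/b)^{\cos u}\leq bd^{-2}$ whenever $\cos u\leq 0$; and you should note explicitly that when $b$ is of order $d$ (where Chebyshev with denominator $(d-b)^2$ degenerates) the whole statement is trivial because every quantity involved is $O(b)$ while $b^3(\log d)^{1/2}d^{-2}\gg b$.
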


\begin{proof}
  This is essentially a sieve (or inclusion-exclusion) argument, which
  may well be already known (although we didn't find it explicitly in
  our survey of the literature). To simplify the notation, we will
  prove the statement by induction on $b$, although this may not be
  necessary; taking care of the error terms is then slightly more
  complicated, and readers should probably first disregard them to see
  the main flow of the argument.
\par
We denote
$$
\Phi_{d,b}(u)=\expect(e^{iu\varpi(\sigma_d)}\charfun_{\ellm{\sigma_d}>b}),
\quad
\Phi_d(u)=\Phi_{d,0}(u),\quad
\harm{b}=\sum_{j=1}^b{\frac{1}{j}}.
$$
\par
We will write
\begin{equation}\label{eq-induction}
\Phi_{d,b}
=\exp(-e^{iu}\harm{b})\Phi_d+|\Phi_d|E_{d,b}+F_{d,b},
\end{equation}
where $E_{d,b}$, $F_{d,b}\geq 0$; such an expression holds for $b=0$,
with $E_{d,0}=F_{d,0}=0$, and we will proceed inductively to obtain an
expression for $\Phi_{d,b}$ from that of $\Phi_{d',b-1}$, $d'\leq d$,
from which we will derive estimates for $E_{d,b}$ and $F_{d,b}$ in
general. Note that we can assume that $d$ is large enough (i.e.,
larger than any fixed constant), since smaller values of $d$ (and $b$)
are automatically incorporated by making the right-most implied
constant large enough in~(\ref{eq-goal}). Also, we can always write
such a formula with $|F_{d,b}|\ll b$, for some absolute constant,
since the characteristic functions $\Phi_{d,b}$ are bounded by $1$ and
$\exp(-e^{iu}\harm{b})\ll b$.
\par
Now, with these preliminaries settled, let $I$ be the set of
$b$-cycles in $\sym_d$; we write $\tau\mid \sigma$ (resp. $\tau\nmid
\sigma$) to indicate that $\tau\in I$ occurs (resp. does not occur) in
the decomposition of $\sigma$ in cycles. Then we have
\begin{align*}
\Phi_{d,b}(u)&=
\expect(e^{iu\varpi(\sigma_d)}\charfun_{\ellm{\sigma_d}>b})\\
&=\frac{1}{d!}\sum_{\stacksum{\ellm{\sigma}>b-1}
{\tau\in I\Rightarrow
    \tau\nmid \sigma}}{e^{iu\varpi(\sigma)}}
=\frac{1}{d!}\sum_{\ellm{\sigma}>b-1}
{e^{iu\varpi(\sigma)}\prod_{\tau\in I}{(1-\charfun_{\tau\mid \sigma})}}.
\end{align*}
\par
We expand the product as a sum over subsets $J\subset I$, and exchange
the two sums, getting
$$
\Phi_{d,b}(u)=
\frac{1}{d!}\sum_{J\subset I}{(-1)^{|J|}
\sum_{\stacksum{\ellm{\sigma}>b-1}
{\tau\in J\Rightarrow \tau\mid \sigma}}
{e^{iu\varpi(\sigma)}}}.
$$
\par
Now fix a $J\subset I$ such that the inner sum is \emph{not
  empty}. This implies of course that the support of the cycles in $J$
are disjoint, in particular that those cycles contribute $|J|$ to
$\varpi(\sigma)$. Moreover, if we call $A$ the complement of the union
of the support of the cycles in $J$, we have $|A|=d-|J|b$, and any
$\sigma$ in the inner sum maps $A$ to itself. Thus, by enumerating the
elements of $A$, we can map injectively those $\sigma$ to permutations
in $\sym_{d-|J|b}$, and the image of this map is exactly the set of
those $\sigma_1\in\sym_{d-|J|b}$ for which
$\ellm{\sigma_1}>b-1$. Moreover, if $\sigma$ maps to $\sigma_1$, we
have
$$
\varpi(\sigma)=|J|+\varpi(\sigma_1),
$$
and thus we get
$$
\sum_{\stacksum{\ellm{\sigma}>b-1}
{\tau\in J\Rightarrow \tau\mid \sigma}}
{e^{iu\varpi(\sigma)}}=
e^{iu|J|}
\sum_{\stacksum{\sigma\in \sym_{d-|J|b}}
{\ellm{\sigma}>b-1}}
{e^{iu\varpi(\sigma)}},
$$
and then
\begin{align*}
\Phi_{d,b}(u)&=
\sum_{J\subset I}{\frac{(d-|J|b)!}{d!}(-e^{iu})^{|J|}
\expect(e^{iu\varpi(\sigma_{d-|J|b})}
\charfun_{\ellm{\sigma_{d-|J|b}}>b-1})},\\
&=\sum_{J\subset I}{\frac{(d-|J|b)!}{d!}(-e^{iu})^{|J|}\Phi_{d-|J|b,b-1}(u)},
\end{align*}
the sum over $J$ being implicitly restricted to those subsets of $I$
for which there is at least one permutation in $\sym_d$ where all
cycles in $J$ occur.
\par
In particular, we have $|J|\leq d/b$ (so there is enough room to find
that many disjoint $b$-cycles), and if we denote by $S(k,b)$ the
number of possible such subsets of $I$ with $|J|=k$, we can write
$$
\Phi_{d,b}(u)
=\sum_{k=0}^{d/b}{S(k,b)\frac{(d-kb)!}{d!}(-e^{iu})^k
\Phi_{d-kb,b-1}(u)}
$$
\par
Now we claim that
$$
S(k,b)=\binom{d}{d-kb}\times \frac{(kb)!}{b^kk!}
=\frac{d!}{(d-kb)!b^kk!}.
$$
\par
Indeed, to construct the subsets $J$ with $|J|=k$, we can first select
arbitrarily a subset $A$ of size $d-kb$ in $\{1,\ldots, d\}$, and then
select, independently, an arbitrary set of $k$ disjoint $b$-cycles
supported outside $A$. The choice of $A$ corresponds to the binomial
factor above, and the second factor is clearly equal to the number of
permutations $\sigma\in \sym_{kb}$ which are a product of $k$ disjoint
$b$-cycles. Those are all conjugate in $\sym_{kb}$, and their
cardinality is given by~(\ref{eq-card-conj}), applied with $d$
replaced by $kb$ and all $r_j=0$ except for $r_b=k$.
\par
Consequently, we obtain the basic induction relation
$$
\Phi_{d,b}(u)
=\sum_{k=0}^{d/b}{\Bigl(\frac{-e^{iu}}{b}\Bigr)^k\frac{1}{k!}
\Phi_{d-kb,b-1}(u)}.
$$
\par
Before applying the induction assumption~(\ref{eq-induction}), we
shorten the sum over $k$ so that $\Phi_{d-kb,b-1}$ will remain close
to $\Phi_{d,b-1}$. For this, we use the inequality
$$
\Bigl|\sum_{k=0}^m{\frac{z^k}{k!}}-e^z\Bigr|\leq \frac{1}{m!},
$$
for $|z|\leq 1$, $m\geq 0$, as well as $|\Phi_{d-kb}(u)|\leq 1$, and
deduce that
\begin{equation}\label{eq-cont}
\Phi_{d,b}(u)
=\sum_{k=0}^{m}{\Bigl(\frac{-e^{iu}}{b}\Bigr)^k\frac{1}{k!}
\Phi_{d-kb,b-1}(u)}
+O\Bigl(\frac{1}{m!}\Bigr),
\end{equation}
for some $m$ to be specified later, subject for the moment only to the
condition $m<d/2b$, and an implied constant which is at most $1$.
\par
By~(\ref{eq-induction}), we have
$$
\Phi_{d-kb,b-1}(u)=\exp(-e^{iu}\harm{b-1})\Phi_{d-kb}(u)
+|\Phi_{d-kb}(u)|E_{d-kb,b-1}+F_{d-kb,b-1}.
$$
\par
Moreover, by~(\ref{eq-unif-char}), we also know that for $k\leq m$, we
have
\begin{equation}\label{eq-shift-char}
  \Phi_{d-kb}(u)=
  \expect(e^{iu\varpi(\sigma_{d})})\erreurm{\frac{kb}{d}}=
  \Phi_d(u)\erreurm{\frac{kb}{d}},
\end{equation}
with an absolute implied constant. Hence, we obtain
$$
\Phi_{d,b}(u)=\exp(-e^{iu}\harm{b-1})\Phi_d(u)M+R+S
$$
where
\begin{align*}
M&=\sum_{k=0}^{m}{\Bigl(\frac{-e^{iu}}{b}\Bigr)^k\frac{1}{k!}
\erreurm{\frac{bk}{d}}}\\
|R|&\leq \sum_{k=0}^m{\frac{1}{b^kk!}E_{d-kb,b-1}|\Phi_{d-kb}(u)|}\\
&=|\Phi_d(u)|\sum_{k=0}^m{\frac{1}{b^kk!}E_{d-kb,b-1}\Bigl(1+O\Bigl(
\frac{kb}{d}
\Bigr)\Bigr)}
\\
|S|&\leq \sum_{k=0}^m{\frac{1}{b^kk!}F_{d-kb,b-1}}+
\frac{1}{m!}.
\end{align*}
\par
We next write
$$
M=\exp\Bigl(-\frac{e^{iu}}{b}\Bigr)+
O\Bigl(\frac{1}{d}\sum_{k=1}^{m}{\frac{1}{b^{k-1}(k-1)!}}\Bigr)
+O\Bigl(\frac{1}{m!}\Bigr),
$$
where the implied constants are absolute, and deduce that
$$
\Phi_{d,b}(u)=\exp(-e^{iu}\harm{b})\Phi_d(u)+|\Phi_d(u)|M_1+R+S,
$$
with
$$
|M_1|\ll \frac{1}{m!}+d^{-1}e^{1/b},
$$
where the implied constant is absolute. The desired shape of the main
term is now visible, and it remains to verify that (for a suitable
$m$) the other terms are bounded as stated in the proposition.
\par
First, comparing with~(\ref{eq-induction}), with the terms in $R_1$
and $R$ contributing to $E_{d,b}$, while those in $S$ contribute to
$F_{d,b}$, we see that we have
$$
F_{d,b}\leq \sum_{k=0}^m{\frac{1}{b^kk!}F_{d-kb,b-1}}+
\frac{1}{m!}.
$$
\par
We now select $m=\lfloor \log d\rfloor$. Then, together with
$F_{d,0}=0$, we claim that this inductive inequality implies
\begin{equation}\label{eq-borne-f}
F_{d,b}\leq Cb^{3}(\log d)^{1/2}d^{-2},
\end{equation}
for $b\leq d$ and some absolute implied constant $C\geq 1$. For a
large enough value of $C$, note that this is already true for all
$d\leq d_0$, where $d_0$ can be any fixed integer. We select $d_0$ so
that
$$
\frac{1}{m!}\leq \frac{1}{d^2}, 
$$
for $d\geq d_0$, and we can thus assume that $d>d_0$ from now on.
\par
The desired bound holds, of course, for $b=0$. It is also trivial if
$b(\log d)\geq d/24$ (say), because we have observed at the beginning
that (\ref{eq-induction}) can be obtained with $F_{d,b}\ll b$. If it
is assumed to be true for all $d$ and $b-1$, we have for $b(\log
d)<d/24$ that
\begin{align*}
  F_{d,b}&\leq\frac{1}{m!}+ C\sum_{k=0}^m{\frac{1}{b^kk!}F_{d-kb,b-1}}
\\
  &\leq \frac{1}{m!}+\frac{C(\log d)^{1/2}(b-1)^{3}}{d^2}\sum_{k=0}^m{
    \frac{1}{b^kk!}\Bigl(1-\frac{kb}{d}\Bigr)^{-2}}.
\end{align*}
\par
We note the following simple inequalities
\begin{gather*}
(1-x)^{-1}\leq e^{2x},\quad \exp(x)\leq 1+\frac{3x}{2},\quad\text{ for
}0\leq x\leq 1/2,
\\
(x-1)e^{1/x}\leq x,\quad\quad\text{ for } 0\leq x\leq 1,
\end{gather*}
and from them we deduce that if $b(\log d)<d/24$ (so that $kb/d\leq
1/2$ for the values involved), we have
$$
\sum_{k=0}^m{\frac{1}{b^kk!}\Bigl(1-\frac{kb}{d}\Bigr)^{-2}} \leq
\sum_{k=0}^m{ \frac{1}{k!}\Bigl( \frac{\exp(4b/d)}{b} \Bigr)^k}
\leq \exp\Bigl(\frac{1}{b}+\frac{6}{d}\Bigr)
$$
and, hence (from the same simple inequalities) we get
\begin{multline*}
(b-1)^3\sum_{k=0}^m{\frac{1}{b^kk!}\Bigl(1-\frac{kb}{d}\Bigr)^{-2}}
\leq
(b-1)^{3/2}\times (b-1)\exp\Bigl(\frac{1}{b}\Bigr)\\
\times \Bigl((b-1)\exp\Bigl(\frac{1}{3d}\Bigr)\Bigr)^{1/2}
 \leq (b-1)^{3/2}b^{3/2}.
\end{multline*}
\par
By the choice of $d_0$, we deduce for $b\geq 1$ and $d>d_0$ that we
have
$$
F_{d,b}\leq d^{-2}(\log d)^{1/2}(1+Cb^{3/2}(b-1)^{3/2})
\leq Cd^{-2}b^3,
$$
(assuming again $C$ large enough), completing the verification
of~(\ref{eq-borne-f}) by induction.
\par
Finally, from~(\ref{eq-induction}) and the foregoing, we deduce
similarly that
$$
E_{d,b}\leq D\Bigl(\frac{1}{m!}+d^{-1}e^{1/b}\Bigr)+
\sum_{k=0}^m{\frac{1}{b^kk!}E_{d-kb,b-1}\Bigl(1+O\Bigl(
\frac{kb}{d}
\Bigr)\Bigr)},
$$
for some absolute constant $D\geq 0$. Fix $\eps>0$, and consider the
bound
$$
E_{d,b}\leq Cb^{1+\eps}d^{-1};
$$
then if $C\geq 1$, assuming it for $b-1$, we obtain the inductive
bound
\begin{align*}
E_{d,b}&\leq d^{-1}\Bigl\{
D(1+e^{1/b})+C(b-1)^{1+\eps}\sum_{k=0}^m{\frac{1}{b^kk!}\Bigl(1+\frac{\beta
  kb}{d}\Bigr)\Bigl(1-\frac{kb}{d}\Bigr)^{-1}}
\Bigr\}\\
&\leq d^{-1}\Bigl\{D(1+e^{1/b})+C(b-1)^{1+\eps}
\exp\Bigl(\frac{1}{b}+\frac{3(\beta+2)}{2d}\Bigr)\Bigr\}
\end{align*}
(using again the elementary inequalities above). Then for $d\geq
d_1(\eps)$, provided $C\geq 1$, we obtain
$$
E_{d,b}\leq Cb^{1+\eps}d^{-1},
$$
confirming the validity of this estimate.
\end{proof}

\begin{rem}
  Proposition~\ref{pr-permut} can itself be seen as an instance of
  mod-Poisson convergence, for the cycle count of randomly, uniformly,
  chosen permutations in $\mathfrak{S}_d$ without small
  cycles. 
\par
Precisely, let $\mathfrak{S}_d^{(b)}$ denote the set of $\sigma\in
\mathfrak{S}_d$ with $\ellm{\sigma}>b$. We then find first (by putting
$u=0$ in Proposition~\ref{pr-permut}) that
$$
\frac{|\mathfrak{S}_d^{(b)}|}{|\mathfrak{S}_d|}
\sim_{d,b\ra +\infty}
\exp\Bigl(-\sum_{j=1}^b{\frac{1}{j}}\Bigr)
\sim \frac{e^{-\gamma}}{b},
$$
provided $b$ is restricted by $b\ll d^{1/2-\eps}$ with $\eps>0$
arbitrarily small. Then, for arbitrary $u\in\Rr$ and $b$ similarly
restricted, we find that
$$
\frac{1}{|\mathfrak{S}_d^{(b)}|}
\sum_{\sigma \in \mathfrak{S}_d^{(b)}}{
  e^{iu\varpi(\sigma)}
}\sim_{d,b\ra +\infty}
\exp\Bigl((1-e^{iu})\sum_{j=1}^b{\frac{1}{j}}\Bigr)
\expect(e^{iu\varpi(\sigma_d)}),
$$
locally uniformly. Thus the mod-Poisson
convergence~(\ref{eq-mod-poisson-permut}) for $\varpi(\sigma_d)$
implies mod-Poisson convergence for the cycle count restricted to
$\mathfrak{S}_d^{(b)}$ as long as $b\ll d^{1/2-\eps}$, with limiting
function $1/\Gamma(e^{iu})$ and parameters
$$
\log d-\sum_{j=1}^b{\frac{1}{j}}\sim \log\frac{d}{b}.
$$
\par
It may be that the restriction of $b$ with respect to $d$ could be
relaxed. However, in the opposite direction, note that for $b=d-1$,
the number of $d$-cycles in $\mathfrak{S}_d$, i.e.,
$|\mathfrak{S}_d^{(d-1)}|$, is $(d-1)!$, so the ratio is $1/d$ which
is obviously not asymptotic with $e^{-\gamma}/(d-1)$.
\end{rem}

\begin{rem}\label{rm-mertens}
We come back to the asymptotic formula~(\ref{eq-mertens}), to explain
how it follows from Theorem~\ref{th-main} in the sharper form
\begin{equation}\label{eq-mertens2}
\prod_{\deg(\irred)\leq d}{\Bigl(1-\frac{1}{\nirred}\Bigr)}=
\exp\Bigl(-\sum_{1\leq j\leq d}{\frac{1}{j}}\Bigr)
\erreurm{\frac{1}{q^{d/2}}}.
\end{equation}
\par
Namely, it is very easy to derive this asymptotic up to some
constant:
$$
\prod_{\deg(\irred)\leq d}{\Bigl(1-\frac{1}{\nirred}\Bigr)}=
\exp\Bigl(\gamma_q-\sum_{1\leq j\leq d}{\frac{1}{j}}\Bigr)
\erreurm{\frac{1}{q^{d/2}}},
$$
where $\gamma_q$ is given by the awkward, yet absolutely convergent,
expression
\begin{equation}\label{eq-gamma-const}
\gamma_q=
\sum_{\irred}{\Bigl(\log\Bigl(1-\frac{1}{\nirred}\Bigr)
+\frac{1}{\nirred}\Bigr)}+
\sum_{j\geq 1}{\Bigl(\frac{\Pi_q(j)}{q^j}-\frac{1}{j}\Bigr)}.
\end{equation}
\par
From this, the flow of the proof leads to the mod-Poisson
limit~(\ref{eq-main}), with an additional factor
$\exp(-\gamma_qe^{iu})$ in the limit. But for $u=0$, both sides
of~(\ref{eq-main}) are equal to $1$, so we must have
$\exp(\gamma_q)=1$ for all $q$. (This is another interesting example
of the information coming from mod-Poisson convergence, which is
invisible at the level of the normal limit; note in particular that
this is really a manifestation of the random permutations.)
\end{rem}

\section{Final comments and questions}

Many natural questions arise out of this paper. The most obvious
concern the general notion of mod-Poisson convergence, and its
probabilistic significance and relation with other types of
convergence and measures of approximation (and similarly for
mod-Gaussian behavior). Already from~\cite{hwang}, it is clear that
mod-Poisson convergence should be a very general fact in the setting
of ``logarithmic combinatorial structures'', as discussed
in~\cite{abt}. 
\par
In the direction suggested by the Erd\H{o}s-Kac Theorem, there is a
very abundant literature concerning generalizations to additive
functions and beyond (see, e.g., the discussion at the end
of~\cite{granville-sound}), and again it would be interesting to know
which of those Central Limit Theorems extend to mod-Poisson
convergence, and maybe even more so, to know which \emph{don't}.
\par
In the direction of pursuing the analogy with distribution of
$L$-functions, the first thing to do might be to construct a proof of
the mod-Poisson Erd\H{o}s-Kac Theorem for integers which parallels
the one of the previous section. This does not seem out of the
question, but our current attempts suffer from the fact that the
associations of permutations in ``$\sym_{\log N}$'' to integers $n\leq
N$ that we have considered are ad-hoc (though potentially useful), and
do not carry the flavor of a generalization of the Frobenius. It is
then difficult to envision a further natural analogue of a unitary
matrix associated, say, with $\zeta(1/2+it)$. One can suggest a ``made
up'' matrix $U_t$ obtained by taking the zeros of $\zeta(s)$ close to
$t$, and wrapping them around the unit circle after proper rescaling,
but this also lacks a good a priori definition -- though this was
studied by Coram and Diaconis~\cite{coram-diaconis}, who obtained
extremely good numerical agreement; this is also close to the
``hybrid'' model for the Riemann zeta function of Gonek, Hughes and
Keating~\cite{ghk}.
\par
One may hope for more success in the case of finite fields in trying
to understand (for instance) families of $L$-functions of algebraic
curves in the limit of large genus, since the definition of a random
matrix from Frobenius does not cause problem there (though recall it
is really a \emph{conjugacy class}). However, although we have
Deligne's Equidistribution Theorem in the ``vertical'' direction $q\ra
+\infty$, and its proof is highly effective, it is not clear what a
suitable analogue of the quantitative ``diagonal''
equidistribution~(\ref{eq-quant-equid}) in Lemma~\ref{lm-distrib}
should be. More precisely, what condition should replace the
restriction to polynomials without small irreducible factors?  We do
not have clear answers at the moment, but we hope to make progress in
later work.
\par
Finally, it should be clear that analogues of mod-Gaussian and
mod-Poisson convergence exist, involving other families of probability
distributions. Some cases related to discrete variables are discussed
in~\cite[\S 5]{bkn}, and one may also define ``mod-stable''
convergence in an obvious way (though we do not have interesting
examples of these to suggest at the moment). It may be interesting to
investigate links between these various definitions; the last part of
Proposition~\ref{prop-cor-mod-poisson} suggests that there should
exist interesting relations.

\end{document}